\DeclareMathOperator{\Char}{char}
\DeclareMathOperator{\id}{id}
\DeclareMathOperator{\Ann}{Ann}
  \newtheorem{The}{Theorem}[section]
  \newtheorem{Pro}[The]{Proposition}
  \newtheorem{Lem}[The]{Lemma}
  \newtheorem{Cor}[The]{Corollary}
	\theoremstyle{definition}
  \newtheorem{Examp}[The]{Example}
	\newtheorem{Ques}{Question}
	\theoremstyle{remark}
	\newtheorem{Rem}[The]{Remark}
\newcommand{\Z}{\mathbb{Z}}
\begin{document}

\author{Samaneh Baghdari}
\address{
Department of Mathematics,
Isfahan University of Technology,\\
Isfahan 81746-73441, Iran}
\email{s.baghdari@math.iut.ac.ir}

\author{Johan \"{O}inert}
\address{
Department of Mathematics and Natural Sciences,
Blekinge Institute of Technology,
SE-37179 Karlskrona, Sweden}
\email{johan.oinert@bth.se}

\makeatletter
\@namedef{subjclassname@2020}{%
  \textup{2020} Mathematics Subject Classification}
\makeatother

\subjclass[2020]{16S34, 16D70, 16P20}
\keywords{Group ring, K\"{o}the ring, Pure semisimple}

\thanks{The authors thank Mahmood Behboodi for fruitful discussions.}

\title[Pure semisimple and K\"{o}the group rings]{Pure semisimple and K\"{o}the group rings}

\date{\today}

\begin{abstract}
	In this article we provide a complete characterization of abelian group rings which are K\"{o}the rings. We also provide characterizations of (possibly non-abelian) group rings over division rings which are K\"{o}the rings, both in characteristic zero and in prime characteristic, and prove a Maschke type result for pure semisimplicity of group rings. 
	Furthermore, we illustrate our results by several examples.
\end{abstract}

\maketitle

\section{Introduction}

Throughout this article, all rings are assumed to be associative.
Recall that nowadays a unital ring $S$ is said to be a \emph{left (resp. right) K\"{o}the ring}
if every left (resp. right) $S$-module is a direct sum of cyclic submodules.
If $S$ is both a left and a right K\"{o}the ring, then $S$ is simply called a \emph{K\"{o}the ring}.
If $S$ is both a left (resp. right) artinian ring and a left (resp. right) principal ideal ring,
then we say that $S$ is a \emph{left (resp. right) artinian principal ideal ring}.
If $S$ is both a left and a right artinian principal ideal ring, then $S$ is simple called an \emph{artinian principal ideal ring}.
A unital ring $S$ is said to be a \emph{left (resp. right) pure semisimple ring} if every left (resp. right) $S$-module is a direct sum of finitely generated modules.

In \cite{Kothe} Gottfried K\"{o}the
proved the following result.

\begin{The}[K\"{o}the]\label{thm:Kothe}
Let $S$ be a unital ring.
If $S$ is an artinian principal ideal ring,
then $S$ is a K\"{o}the ring.
\end{The}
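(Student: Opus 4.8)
The plan is to exploit the two structural consequences of the hypothesis separately: that $S$ is artinian (so its module category decomposes into blocks and behaves finitely), and that $S$ is a principal ideal ring (which will force its indecomposable projectives to be uniserial).

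First I would reduce to the indecomposable case. Since $S$ is two-sided artinian it is semiperfect and its identity decomposes as a sum of orthogonal central primitive idempotents, yielding a finite product decomposition $S \cong S_1 \times \cdots \times S_n$ into indecomposable rings. Every $S$-module is then the direct sum of its components over the factors $S_i$, and each $S_i$ inherits from $S$ the property of being an artinian principal ideal ring (the one-sided ideals of a finite product are products of the corresponding one-sided ideals of the factors, and a generator of an ideal of $S$ restricts to a generator in each $S_i$). Hence it suffices to prove that each indecomposable factor is a K\"{o}the ring, and by the left--right symmetry of the hypotheses it is enough to decompose left modules.

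Second, I would pin down the structure of an indecomposable artinian principal ideal ring $S$. Its Jacobson radical $J = J(S)$ is nilpotent, say $J^m = 0$, the quotient $S/J$ is semisimple artinian, and since central idempotents lift modulo the nilpotent ideal $J$, indecomposability forces $S/J$ to be \emph{simple} artinian. The principal ideal hypothesis is the decisive input: I would use it to show that $S$ is a \emph{serial} (generalized uniserial) ring, meaning that each indecomposable projective left (and right) module $P$ is uniserial, with submodule lattice the single chain $P \supseteq JP \supseteq J^2 P \supseteq \cdots \supseteq J^m P = 0$. The point is that the radical chain $S \supseteq J \supseteq \cdots \supseteq J^m = 0$ already controls all one-sided ideals once every one-sided ideal is principal, and this linear ordering propagates to the principal indecomposable modules.

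Finally, I would invoke the representation theory of serial artinian rings. Over such a ring every indecomposable module is uniserial, hence a cyclic quotient $P / J^i P$ of a principal indecomposable projective; in particular there are only finitely many indecomposable modules up to isomorphism, so $S$ has finite representation type. For finitely generated modules the decomposition into uniserial (cyclic) summands is the classical Krull--Schmidt/Nakayama result. To reach \emph{all} modules I would then apply the theorem that over an artinian ring of finite representation type every module is a direct sum of finitely generated indecomposable modules; combined with the uniseriality this exhibits an arbitrary left $S$-module as a direct sum of cyclic submodules, and symmetrically on the right. I expect the genuine obstacle to be exactly this last passage from finitely generated to arbitrary modules: the finite-type decomposition and the uniseriality of finitely generated modules are standard, but guaranteeing that an infinitely generated module still splits as a direct sum of the finitely many cyclic indecomposable types is the delicate point, and is where a substantive decomposition theorem (finite representation type, equivalently pure semisimplicity) must be brought in.
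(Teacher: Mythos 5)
The paper does not actually prove Theorem~\ref{thm:Kothe}: it is quoted as a classical result, with a citation to K\"{o}the's 1935 article, so your proposal can only be measured against the standard literature. Your overall architecture --- split off indecomposable ring factors, show that an artinian principal ideal ring is serial (generalized uniserial), then invoke the decomposition theory of serial rings --- is the standard modern route and can be completed. But be aware of what you are black-boxing: the step ``artinian PIR $\Rightarrow$ serial'' is a substantive classical theorem (K\"{o}the--Asano), and your justification for it (``the radical chain already controls all one-sided ideals once every one-sided ideal is principal'') is too vague to count as a proof. Likewise, the passage from finitely generated to arbitrary modules is exactly the theorem that over a generalized uniserial ring \emph{every} module is a direct sum of uniserial (hence cyclic) modules --- this is Nakayama's theorem, reproved in Griffith's article \cite{Griffith}, which happens to appear in this paper's bibliography. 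Citing it (or finite representation type $\Rightarrow$ pure semisimplicity) is legitimate, though it means your argument assembles K\"{o}the's theorem from later, more general results rather than reproving it.

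One step of your sketch is genuinely wrong as stated: the claim that, since idempotents lift modulo the nilpotent ideal $J$, indecomposability of $S$ forces $S/J$ to be simple. Idempotents do lift modulo nil ideals, but \emph{central} idempotents of $S/J$ need not lift to \emph{central} idempotents of $S$. The ring $S$ of upper triangular $2\times 2$ matrices over a field $k$ is an indecomposable artinian ring (its only central idempotents are $0$ and $1$), yet $S/J \cong k \times k$ is not simple; this shows your argument, which uses only ``artinian + indecomposable,'' cannot be right. The conclusion does hold for indecomposable artinian \emph{principal ideal} rings (each is a matrix ring over a local uniserial artinian ring, by the structure theorem), but then the PIR hypothesis must be used in this step too. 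Fortunately the claim is dispensable: seriality alone drives the rest of your argument, so the cleanest repair is simply to delete it.
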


K\"{o}the \cite{Kothe} also showed that if $S$ is a unital commutative artinian K\"{o}the ring,
then $S$ is necessarily a principal ideal ring.
Later, Cohen and Kaplansky \cite{CohenKap} showed that
if $S$ is a unital commutative K\"{o}the ring,
then $S$ is necessarily an artinian principal ideal ring.
By combining K\"{o}the's results with those of Cohen and Kaplansky \cite{CohenKap}
one obtains the following characterization in the commutative setting.

\begin{The}[K\"{o}the, Cohen \& Kaplansky]\label{thm:KCP}
Let $S$ be a unital commutative ring.
Then $S$ is a K\"{o}the ring
if and only if
$S$ is an artinian principal ideal ring. 
\end{The}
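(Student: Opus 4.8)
The plan is to establish the two implications of the biconditional separately, since each is furnished by one of the results already recorded above. Note first that because $S$ is commutative its left and right module categories coincide, so the one-sided and two-sided notions of K\"othe ring (and likewise of artinian principal ideal ring) agree; thus the hypotheses of the cited theorems are met without any left/right bookkeeping.

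For the ``if'' direction, suppose $S$ is an artinian principal ideal ring. As a unital ring it falls under Theorem~\ref{thm:Kothe} verbatim, which yields that $S$ is a K\"othe ring; commutativity is not even needed here, so this direction is immediate. For the ``only if'' direction, suppose $S$ is a commutative K\"othe ring. Then the Cohen--Kaplansky result quoted above --- every unital commutative K\"othe ring is an artinian principal ideal ring --- delivers the conclusion directly. Pairing these two statements gives the claimed equivalence.

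The main obstacle, were one to argue from first principles rather than by assembling the cited theorems, lies entirely in the forward (Cohen--Kaplansky) half: one must extract from the purely module-theoretic hypothesis that every module is a direct sum of cyclics the strong ring-theoretic consequences of being both artinian and a principal ideal ring. By contrast the backward implication, K\"othe's theorem, is comparatively soft. Given what is already available in the excerpt, however, the proof collapses to a short combination --- Theorem~\ref{thm:Kothe} for one direction and the Cohen--Kaplansky statement for the other --- and there is essentially no further work to do.
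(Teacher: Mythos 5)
Your proof is correct and matches the paper exactly: the paper obtains this theorem by combining Theorem~\ref{thm:Kothe} (the ``if'' direction) with the quoted Cohen--Kaplansky result that every unital commutative K\"othe ring is an artinian principal ideal ring (the ``only if'' direction), just as you do.
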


Although a (possibly non-commutative) K\"{o}the ring is necessarily artinian
(see Proposition~\ref{prop:FaithWalker}),
the above characterization does not hold in the non-commutative setting.
Indeed, in \cite{Nakayama} Nakayama gave an example of a non-commutative K\"{o}the ring which fails to be a principal ideal ring.

Nevertheless,
as was shown by 
Behboodi et al. \cite{BehboodiEtAl},
Theorem~\ref{thm:KCP}
can in fact be generalized to certain (potentially) non-commutative rings, namely the abelian rings.

\begin{The}[Behboodi et al.]\label{thm:Behboodi}
Let $S$ be a unital abelian ring, i.e. a unital ring in which every idempotent is central.
Then $S$ is a K\"{o}the ring
if and only if
$S$ is an artinian principal ideal ring.
\end{The}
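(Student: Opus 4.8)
The implication ``artinian principal ideal ring $\Rightarrow$ K\"othe ring'' is exactly Theorem~\ref{thm:Kothe}, so the whole content lies in the converse. Assume therefore that $S$ is an abelian K\"othe ring. Since every cyclic module is finitely generated, $S$ is (left and right) pure semisimple, hence artinian by Proposition~\ref{prop:FaithWalker}. Being artinian, $S$ is semiperfect, so $1$ is a sum of orthogonal primitive idempotents; the abelian hypothesis forces these to be central, and therefore $S\cong S_1\times\cdots\times S_n$ is a finite direct product of local artinian rings. As both ``K\"othe ring'' and ``artinian principal ideal ring'' are properties that hold for a finite direct product precisely when they hold for each factor (left modules, two-sided ideals, and cyclicity all decompose along the product), I am reduced to proving the following: a local artinian K\"othe ring is a principal ideal ring.

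So let $S$ be local artinian with Jacobson radical $J$ and residue division ring $D=S/J$; I will show that $J$ is principal as a left ideal, the argument on the right being symmetric. First I would reduce to the case $J^2=0$: every left module over $\bar S:=S/J^2$ is a left $S$-module, and in a decomposition into cyclic $S$-modules supplied by the K\"othe property each summand, being a submodule of a module annihilated by $J^2$, is again annihilated by $J^2$ and is therefore a cyclic $\bar S$-module; thus $\bar S$ is K\"othe. Showing that $\bar J:=J/J^2$ is at most one-dimensional as a left $D$-space is then exactly what is needed, since by Nakayama's lemma a left-$D$-basis of $J/J^2$ lifts to a minimal left generating set of $J$. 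Hence it suffices to treat a local K\"othe ring with square-zero radical and prove that $\dim_D \bar J\le 1$ on the left.

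The heart of the matter --- and the step I expect to be the main obstacle --- is to show that $\dim_D \bar J\ge 2$ contradicts the K\"othe property by exhibiting a finitely generated left module that is not a direct sum of cyclics. Over a ring with square-zero radical, a finitely generated module $M$ is cyclic if and only if $\dim_D M/\bar JM\le 1$, since by Nakayama a generator of the top lifts to a generator of $M$. The idea is to build an indecomposable module with two-dimensional top and simple socle: choosing left-$D$-independent elements $a_1,a_2\in\bar J$, I would form $M=\bar S^{\,2}/W$, where $W$ is the left-$D$-subspace of the socle $\bar J\bar S^{\,2}=\bar Je_1\oplus\bar Je_2$ that is the kernel of the $D$-linear map sending $a_1e_1$ and $a_2e_2$ to a common nonzero value and a complementary basis to $0$. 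Then $M$ has two-dimensional top, so it is non-cyclic, while $\bar JM$ is one-dimensional; the work is to verify that $M$ is well defined as an $S$-module even when $S$ does not split over $D$, and that $\Ann_M(\bar J)$ coincides with $\bar JM$, so that $M$ is uniform and hence indecomposable. An indecomposable module that is a direct sum of cyclics must itself be cyclic, so the existence of such an $M$ contradicts that $S$ is K\"othe, forcing $\dim_D\bar J\le 1$. Carrying out this construction while handling the noncommutative $(D,D)$-bimodule bookkeeping is the crux of the proof.

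Finally, with $J$ principal on both sides I would conclude that $S$ is a principal ideal ring. Writing $J=Sa$ and using $JS=J$, one checks inductively that $J^k=Sa^k$, so each layer $J^k/J^{k+1}$ is a cyclic, hence at most one-dimensional, left $D$-space; thus the radical series of ${}_SS$ has simple factors. A standard Nakayama argument then shows that every left ideal equals some $J^k$, i.e.\ $S$ is left uniserial and in particular a left principal ideal ring, and symmetrically on the right. Combined with the product decomposition of the first paragraph and with Theorem~\ref{thm:Kothe} for the easy implication, this yields the stated equivalence.
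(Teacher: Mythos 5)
The paper never proves Theorem~\ref{thm:Behboodi}: it is imported verbatim from \cite{BehboodiEtAl}, so there is no internal proof to compare you against, and your proposal must stand on its own. Its outer layers do stand: the easy direction is indeed Theorem~\ref{thm:Kothe}; K\"{o}the $\Rightarrow$ pure semisimple $\Rightarrow$ artinian is Proposition~\ref{prop:FaithWalker}; abelian artinian $\Rightarrow$ finite product of local rings is exactly the paper's Lemma~\ref{lem:AbelianDecomp}; the product reduction, the passage to $\bar S=S/J^2$, and the endgame ($\dim_D J/J^2\le 1$ on both sides $\Rightarrow$ uniserial $\Rightarrow$ principal ideal ring) are all correct.

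The problem is the step you yourself label ``the crux'': it is not merely unfinished bookkeeping --- the construction as you specified it is false in general. Concretely, let $D=\mathbb{C}$, let $\bar J=\mathbb{C}b_1\oplus\mathbb{C}b_2$ be the $(D,D)$-bimodule with right action twisted by conjugation, $b_1c=cb_1$ and $b_2c=\bar{c}\,b_2$, and let $\bar S=D\ltimes\bar J$ be the trivial extension. This is a local (hence abelian) artinian ring with $\bar J^{\,2}=0$ and $\dim_D\bar J=2$, so by the theorem it is not a K\"{o}the ring, and your construction ought to produce a module that is not a direct sum of cyclics. But run your recipe with the left-independent pair $a_1=b_1+b_2$, $a_2=i(b_1-b_2)$, so $W=\ker\phi$ with $\phi(a_1e_1)=\phi(a_2e_2)=v\neq 0$ and $\phi(a_2e_1)=\phi(a_1e_2)=0$. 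Taking the unit $s=-i\in D^{\times}\subseteq\bar S$, one computes $a_1s=-a_2$ and $a_2s=a_1$, hence $\phi\bigl(je_1+(js)e_2\bigr)=0$ for $j=a_1$ (it equals $v-v$) and for $j=a_2$ (it equals $0+0$), and so for all $j\in\bar J$ by left $D$-linearity. Thus $\bar x:=\overline{e_1+se_2}$ is a socle element of $M=\bar S^{\,2}/W$ lying outside $\bar JM$, so $\Ann_M(\bar J)\neq\bar JM$, and in fact $M=\bar S\bar e_1\oplus\bar S\bar x$ with $\bar S\bar x=D\bar x$: the intersection is zero because $s\notin\bar J$ forces $\bar x\notin\bar S\bar e_1$, and the $D$-dimensions $2+1$ exhaust $\dim_D M=3$. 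So your $M$ \emph{is} a direct sum of cyclic modules and yields no contradiction. The failure mechanism is precisely the bimodule issue you deferred: units of $\bar S$ act on $\bar J$ from the right through a genuine right $D$-structure, which need not preserve the lines $Da_1$, $Da_2$. For this particular ring a good choice exists (taking $a_1=b_1$, $a_2=b_2$ works, since right multiplication by $D^{\times}$ then preserves both coordinate lines), but your argument contains no proof that a good choice of $a_1,a_2$ (equivalently, of $W$) exists for an arbitrary division ring $D$ and arbitrary bimodule $\bar J$. That existence statement is essentially the hard content of the theorem of Behboodi et al., so as written the proposal assumes what it needs to prove.
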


For further reading on the investigation of K\"{o}the rings, we refer the reader to \cite{BehboodiEtAl,Faith,FazelNasIsf,Girvan1973,Griffith}.

Recall that given a unital ring $R$ and a group $G$, the group ring $R[G]$ is a free left $R$-module
with (a copy of) $G$ as its basis and with multiplication defined as the bilinear extension of the rule
$r_1 g_1 \cdot r_2 g_2 = r_1r_2 g_1g_2$, for $r_1,r_2\in R$ and $g_1,g_2\in G$.
For an excellent introduction to the theory of group rings, we refer the reader to Passman's extensive
book \cite{PassmanBook}, and the references therein.
For examples of applications in coding theory,
we refer the reader to \cite{JitmanEtAl}
which examines certain codes based on commutative K\"{o}the group rings.

In this article we will attempt to answer the following questions.

\begin{Ques}\label{Q:main}
Given a ring $R$ and a group $G$, when is the group ring $R[G]$ a K\"{o}the ring?
\end{Ques}

\begin{Ques}\label{Q:mainPure}
Given a ring $R$ and a group $G$, when is the group ring $R[G]$ a pure semisimple ring?
\end{Ques}

Here is an outline of this article.

In Section~\ref{Sec:NotaPrel} we gather some definitions and well-known facts from group ring theory
that we need in the sequel.
In Section~\ref{Sec:General} we make some useful observations on general rings.
We also obtain necessary conditions for a group ring to be a K\"{o}the ring or pure semisimple (see Proposition~\ref{prop:KoetheNec}).
In Section~\ref{Sec:local} we prepare for Section~\ref{Sec:MainResult}
by establishing a characterization of abelian K\"{o}the group rings
over local rings (see Theorem~\ref{thm:localcase}).
In Section~\ref{Sec:MainResult}
we completely answer Question~\ref{Q:main} for abelian group rings,
by proving our first main result.

\begin{The}\label{thm:MainThmA}
Let $R$ be a unital ring, let $G$ be a group, and suppose that $R[G]$ is abelian. 
The following two assertions are equivalent:
\begin{enumerate}[{\rm (i)}]
    \item The group ring $R[G]$ is a K\"{o}the ring;
    \item There is a positive integer $n$ and local rings $(R_i,M_i)$, for $i\in \{1,\ldots,n\}$,
		such that $R = R_1 \times \ldots \times R_n$, and $R$ is a K\"{o}the ring.
		The group $G$ is finite and $p^{\prime}$-by-cyclic $p$,
	for every $p \in \Z^+ \cap \{ \Char(R_i/M_i) \mid 1 \leq i \leq n\}$.
  Moreover, $|G| \cdot 1_{R_i} \in U(R_i)$
  whenever $R_i$ is not semiprimitive.
\end{enumerate}
\end{The}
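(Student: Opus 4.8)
The plan is to reduce the statement to the local situation already settled in Theorem~\ref{thm:localcase} by splitting $R$ into a finite product of local rings, and then to collect the resulting arithmetic conditions factor by factor. Throughout I would lean on two facts. First, by Theorem~\ref{thm:Behboodi}, an abelian ring is a K\"{o}the ring if and only if it is an artinian principal ideal ring. Second, the class of K\"{o}the rings is closed under quotients and under finite direct products: a module over a quotient is a module over the original ring, and a module over a finite product decomposes along the factors, with a cyclic module over a factor remaining cyclic over the product.

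For the implication (i)~$\Rightarrow$~(ii), I would first note that $R$ is itself abelian, since any idempotent of $R$ is an idempotent of $R[G]$, hence central in $R[G]$ by hypothesis, and therefore central in $R$. As the augmentation map $R[G] \to R$ is a surjective ring homomorphism, $R$ is a quotient of the K\"{o}the ring $R[G]$ and hence a K\"{o}the ring; being abelian, it is an artinian principal ideal ring by Theorem~\ref{thm:Behboodi}, and in particular semiperfect. Choosing a complete set of orthogonal primitive idempotents and using that all idempotents are central, I would write $R = R_1 \times \cdots \times R_n$, where each direct factor $R_i$ is semiperfect with no nontrivial idempotents, hence local, say with maximal ideal $M_i$. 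This induces the ring decomposition $R[G] = R_1[G] \times \cdots \times R_n[G]$, in which each factor $R_i[G]$ is again abelian (its idempotents are idempotents of the product $R[G]$, hence central) and is a K\"{o}the ring (being a direct factor, that is, a quotient, of the K\"{o}the ring $R[G]$).

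At this point each $R_i[G]$ is an abelian K\"{o}the group ring over the local ring $(R_i,M_i)$, so Theorem~\ref{thm:localcase} applies and characterizes, for each $i$, the K\"{o}the property in terms of finiteness of $G$, the $p'$-by-cyclic $p$ condition for $p = \Char(R_i/M_i)$ when this characteristic is a positive prime, and the unit condition $|G| \cdot 1_{R_i} \in U(R_i)$ exactly when $R_i$ fails to be a division ring, i.e. when $M_i \neq 0$, which for a local ring is precisely the failure of semiprimitivity. Collecting these conditions over $i \in \{1,\ldots,n\}$ yields (ii); the relevant set of primes is $\Z^+ \cap \{\Char(R_i/M_i) \mid 1 \leq i \leq n\}$, since a residue field of characteristic zero imposes no congruence condition on $|G|$. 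For the converse (ii)~$\Rightarrow$~(i) I would run the decomposition in reverse: from $R = R_1 \times \cdots \times R_n$ and the hypothesis that $R$ is a K\"{o}the ring, each local factor $R_i$ is a K\"{o}the ring, and the stated conditions on $G$ and on $|G| \cdot 1_{R_i}$ are exactly what Theorem~\ref{thm:localcase} requires to conclude that each $R_i[G]$ is a K\"{o}the ring; a finite product of K\"{o}the rings is a K\"{o}the ring, so $R[G]$ is a K\"{o}the ring.

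The genuinely nontrivial input is Theorem~\ref{thm:localcase}; once the local case is granted, the remaining work is the reduction to it, and I expect the main obstacle to be the passage between $R[G]$ and the local factors $R_i[G]$. Concretely, one must verify that abelianness and the K\"{o}the property both descend to and ascend from the direct factors, and that the primitive central idempotents of $R$ genuinely split $R$ into \emph{local} rings rather than merely indecomposable ones (for which semiperfectness of the factors is the key point). A secondary subtlety worth recording is the interplay of the two arithmetic conditions: when $M_i \neq 0$ and the residue characteristic $p$ is positive, the unit condition forces $p \nmid |G|$, which already renders the $p'$-by-cyclic $p$ condition automatic, so the two conditions in (ii) are not independent across the division-ring and non-division-ring cases.
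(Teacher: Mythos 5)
Your proposal is correct and follows essentially the same route as the paper: reduce to Theorem~\ref{thm:localcase} by decomposing the abelian artinian ring $R$ into a finite product of local rings, apply the local theorem factor by factor, and reassemble. The only cosmetic differences are that the paper obtains the local decomposition by citing Habeb's result (Lemma~\ref{lem:AbelianDecomp}) instead of your semiperfect/primitive-central-idempotent argument, and reassembles via Lemma~\ref{lemma:FinProdAbelKothe} instead of your direct observation that a finite product of K\"{o}the rings is a K\"{o}the ring.
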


Furthermore, in Section~\ref{Sec:division} we completely answer Question~\ref{Q:main} both in the case when $R$ is a division ring of characteristic zero
(see Theorem~\ref{thm:divNC}), and in the case when $R$ is a division ring of prime characteristic and $G$ is a finite lagrangian Dedekind group (see Theorem~\ref{thm:divPrime}).
In Section~\ref{Sec:PureProj} we introduce a technique involving pure projective modules 
as a means to tackle Question~\ref{Q:mainPure}
and use it to prove a Maschke type result for pure semisimplicity of group rings, which is our second main result.

\begin{The}\label{thm:RGPureSemisimple}
Let $R$ be a ring and let $G$ be a finite group.
Suppose that $|G| \cdot 1_R \in U(R)$.
Then $R$ is left (resp. right) pure semisimple if and only if the group ring $R[G]$ is left (resp. right) pure semisimple.
\end{The}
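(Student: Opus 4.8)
The plan is to work throughout with the characterization of (left) pure semisimplicity in terms of pure projective modules: a ring $S$ is left pure semisimple precisely when every left $S$-module is pure projective, where a module is pure projective if and only if it is a direct summand of a direct sum of finitely presented modules. Two structural facts about the extension $R \subseteq S := R[G]$ drive the argument. First, since $G$ is finite, $S$ is free of finite rank $|G|$ as a left (and as a right) $R$-module, and the elements of $R$ commute with those of $G$ inside $S$; consequently the restriction functor sends finitely presented $S$-modules to finitely presented $R$-modules, and, being exact and additive, sends pure projective $S$-modules to pure projective $R$-modules. Second, restriction preserves purity: if $0 \to A \to B \to C \to 0$ is a pure exact sequence of $S$-modules, then it is pure exact as a sequence of $R$-modules. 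I would establish this last point via the Hom--tensor adjunction $\operatorname{Hom}_R(F,\operatorname{Res} N) \cong \operatorname{Hom}_S(S\otimes_R F, N)$, noting that $S\otimes_R F$ is finitely presented over $S$ whenever $F$ is finitely presented over $R$ (because $S$ is finitely generated projective over $R$), so that liftings of maps out of finitely presented $R$-modules are supplied by the $S$-purity of the original sequence.

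For the implication that $R$ being left pure semisimple forces $R[G]$ to be left pure semisimple, I would take an arbitrary left $S$-module $M$ and an arbitrary pure exact sequence $0 \to A \to B \xrightarrow{\pi} M \to 0$ of $S$-modules, and show that it splits. By the purity-transfer fact above it is pure exact over $R$; since $R$ is left pure semisimple, $M$ is pure projective over $R$, so the identity on $M$ lifts to an $R$-linear map $s \colon M \to B$ with $\pi \circ s = \id_M$. Here the hypothesis $|G|\cdot 1_R \in U(R)$ enters through the classical Maschke averaging: the map
$$ s'(m) = \frac{1}{|G|} \sum_{g\in G} g\cdot s(g^{-1}m) $$
is well defined, is $S$-linear, and still satisfies $\pi \circ s' = \id_M$ because $\pi$ is $S$-linear. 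Thus every pure exact sequence of $S$-modules ending in $M$ splits, so $M$ is pure projective over $S$; as $M$ was arbitrary, $R[G]$ is left pure semisimple.

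For the converse, assume $R[G]$ is left pure semisimple and let $N$ be an arbitrary left $R$-module. I would pass to the induced module $S\otimes_R N$, which is pure projective over $S$ by hypothesis, and then restrict: since $S \cong \bigoplus_{g\in G} {}_R R_R$ as an $(R,R)$-bimodule, one has $\operatorname{Res}(S\otimes_R N) \cong N^{|G|}$ as left $R$-modules. By the first paragraph, restriction carries the pure projective $S$-module $S\otimes_R N$ to a pure projective $R$-module, so $N^{|G|}$ is pure projective over $R$; as $N$ is a direct summand of $N^{|G|}$, it too is pure projective over $R$. Since $N$ was arbitrary, $R$ is left pure semisimple.

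The right-handed statement follows by the symmetric argument, using right modules and averaging on the right. I expect the main obstacle to be the purity-transfer lemma of the first paragraph --- verifying that restriction along $R \subseteq R[G]$ carries pure exact sequences to pure exact sequences --- since this is where the finiteness of $G$, through finite generation and projectivity of $S$ over $R$, must be converted into the statement that induction preserves finite presentation. Once this is in place, the remaining steps are the standard Maschke averaging and a routine direct-summand argument.
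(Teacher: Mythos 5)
Your proof is correct, and your forward direction ($R$ pure semisimple $\Rightarrow$ $R[G]$ pure semisimple) is essentially the paper's argument: restrict an arbitrary pure exact sequence of $R[G]$-modules to $R$, split it there using pure projectivity over $R$ (Proposition~\ref{prop:CharPureSS}), and upgrade the $R$-splitting to an $R[G]$-splitting by Maschke averaging with $|G|^{-1}$. The differences lie in the two supporting steps. For the purity-transfer lemma, the paper argues elementarily via the matrix (equational) criterion for pure submodules --- a system of equations over a subring $T \subseteq S$ is in particular a system over $S$ --- which works verbatim for any subring; your adjunction argument via $\operatorname{Hom}_R(F,\operatorname{Res} N)\cong \operatorname{Hom}_S(S\otimes_R F,N)$ is also valid, but note it needs less than you claim: extension of scalars preserves finite presentation along \emph{any} ring homomorphism, so finite generation and projectivity of $S$ over $R$ is not where finiteness of $G$ enters (it enters only through the averaging and through your converse). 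For the converse, the paper is shorter and more general: it applies Proposition~\ref{prop:KoetheNec}(a) (ultimately Lemma~\ref{lem:HomomorphicImage}(c), that pure semisimplicity passes to homomorphic images) to the augmentation map $R[G]\to R$, which works for an arbitrary group $G$ and uses neither invertibility of $|G|$ nor finiteness. Your induction--restriction transfer --- $S\otimes_R N$ is pure projective over $S$, restricts to $N^{|G|}$ over $R$, and $N$ is a direct summand of $N^{|G|}$ --- is a genuinely different and correct route, but it rests on Warfield's characterization of pure projectives as direct summands of direct sums of finitely presented modules (which you should cite or prove, since the paper's definition of pure projectivity is relative projectivity, not this), and it also needs $G$ finite so that restriction preserves finite presentation; both of these the paper's quotient argument avoids.
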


For commutative group rings, the above result allows us to give an alternative proof
of our first main result.
In Section~\ref{Sec:Examples} we present examples of group rings which are K\"{o}the rings resp. not K\"{o}the rings.

\section{Notation and preliminaries}\label{Sec:NotaPrel}

In this section we recall notation, important definitions and earlier results.

\subsection{Group rings}

Let $R$ be a unital ring and let $G$ be a multiplicatively written group.
The \emph{group ring of $G$ over $R$} is denoted by $R[G]$. Each element 
$a \in R[G]$ can be uniquely written as $a = \sum_{g \in G} r_{g} g$ 
where $r_g \in R$ is zero for all but finitely many $g\in G$.

\begin{Rem}\label{rem:AugMap}
For any unital ring $S$, we let $U(S)$ denote the group of invertible elements of $S$.
\end{Rem}

\subsection{Artinian group rings}

We will invoke the following result by Connell \cite{Connell}.

\begin{The}[Connell]\label{thm:Connell}
Let $R$ be a unital ring and let $G$ be a group.
The group ring $R[G]$ is left (resp. right) artinian if and only if $R$ is left (resp. right) artinian and $G$ is finite.
\end{The}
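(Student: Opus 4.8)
The plan is to prove the two implications separately; the converse is routine and the forward implication carries all the difficulty. For the converse, suppose $R$ is left artinian and $G$ is finite. Then $R[G]$ is a free left $R$-module of finite rank $|G|$, hence an artinian left $R$-module, being a finite direct sum of copies of the artinian module ${}_RR$. Every left ideal of $R[G]$ is in particular a left $R$-submodule, so the descending chain condition passes from $R$-submodules to left ideals, and $R[G]$ is left artinian. The right-handed version is identical.

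For the forward implication, assume $R[G]$ is left artinian. That $R$ is left artinian is immediate, since the augmentation map $\varepsilon\colon R[G]\to R$, $\sum_g r_g g\mapsto \sum_g r_g$, is a surjective ring homomorphism, so $R\cong R[G]/\ker\varepsilon$ is a quotient of a left artinian ring. It remains to show that $G$ is finite; assuming $R\neq 0$, I argue by contradiction, producing an infinite strictly descending chain of left ideals whenever $G$ is infinite. Two reductions set the stage. First, for any subgroup $H\le G$ the ring $R[G]$ is free, hence faithfully flat, as a right $R[H]$-module (use a transversal of $G/H$ as a basis); applying $R[G]\otimes_{R[H]}-$ to a descending chain of left ideals of $R[H]$ and invoking faithful flatness shows that the chain must stabilise, so $R[H]$ is again left artinian, and symmetrically so is every $R[G/N]$ with $N\trianglelefteq G$. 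Second, since $R$ is artinian one may replace it by $R/J(R)$: the ring $(R/J(R))[G]\cong R[G]/J(R)[G]$ is a quotient of $R[G]$, and writing $R/J(R)\cong \prod_i M_{n_i}(D_i)$ reduces the problem, via $M_{n_i}(D_i)[G]\cong M_{n_i}(D_i[G])$ and the Morita invariance of the chain conditions, to the case where $R=D$ is a division ring. Left ideals are then $D$-subspaces, which makes chains easy to compare.

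With $R=D$ a division ring and $G$ infinite, three cases illustrate how a descending chain arises. If $G$ has an element $x$ of infinite order, then $D[\langle x\rangle]\cong D[t,t^{-1}]$ is left artinian by the subgroup reduction, yet the left ideals $(t-1)^m D[t,t^{-1}]$ form a strictly descending chain, since $t-1$ is regular (a leading-term argument) but not a unit (as $\varepsilon(t-1)=0$); contradiction. If $G$ is torsion with $G^{\mathrm{ab}}$ infinite, I pass to the artinian quotient $D[G^{\mathrm{ab}}]$; an infinite abelian torsion group has a strictly increasing chain of finite subgroups $H_1<H_2<\cdots$, and the coset sums $\widehat{H_n}=\sum_{h\in H_n}h$ generate a strictly descending chain of left ideals, strict because $\widehat{H_n}$ is not fixed under left translation by $H_{n+1}$ whereas every element of $D[G^{\mathrm{ab}}]\widehat{H_{n+1}}$ is. Finally, if $G$ is perfect, then $\omega:=\ker\varepsilon$ satisfies $\omega=\omega^2$ (as $\omega/\omega^2$ is governed by $G^{\mathrm{ab}}=0$); being finitely generated (left artinian rings are left Noetherian), this idempotent ideal is generated by an idempotent, $\omega=D[G]e$. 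Then $f:=1-e$ gives $D[G]f\cong D[G]/\omega\cong D$, one-dimensional over $D$, so $gf=\lambda_g f$ with $\lambda_g\in D$; comparing augmentations yields $\lambda_g=1$, i.e. $gf=f$ for all $g$. Thus $0\neq f$ lies in the right annihilator of $\omega$, namely $\{x\in D[G]:gx=x\text{ for all }g\in G\}$; writing $x=\sum_h r_h h$, this forces the coefficient function to be constant on $G$, whence $x=0$ by finiteness of support — the final contradiction.

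The main obstacle is to cover every infinite $G$ uniformly: a torsion group with finite, nontrivial abelianisation that is not perfect falls into none of the three cases above. Closing this gap calls for descending the derived series (each term remaining an artinian group ring by the subgroup reduction) while combining the annihilator/idempotent dichotomy with the nilpotent-versus-nil behaviour of $\omega$ in the modular case. This structural step is the most delicate point, and it is precisely what Connell's original argument supplies.
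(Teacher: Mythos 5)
The paper itself offers no proof of this statement---it is quoted verbatim from Connell's paper \cite{Connell} as a known result---so your attempt can only be measured against Connell's classical argument, and against that standard it is incomplete, as you yourself concede in your final paragraph. The easy direction and your two reductions are correct and are indeed the standard ones: $R[G]$ is free, hence faithfully flat, as a right $R[H]$-module, so artinianity descends to $R[H]$; and passing through $R/J(R)\cong\prod_i M_{n_i}(D_i)$ with $M_{n_i}(D_i)[G]\cong M_{n_i}(D_i[G])$ and Morita invariance reduces everything to $D[G]$ with $D$ a division ring. But the trichotomy ``element of infinite order / torsion with infinite abelianisation / perfect'' simply does not exhaust the infinite groups: an infinite torsion group $G$ with $G^{\mathrm{ab}}$ finite and nontrivial---for instance a free Burnside group $B(m,n)$ of large odd exponent, whose abelianisation is the finite group $(\Z/n\Z)^m$---falls into none of the three cases. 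Your proposed repair via the derived series needs that series to stabilise, which requires DCC on subgroups of $G$; this \emph{can} be extracted from artinianity of $D[G]$ via the strictly monotone map $H\mapsto D[G]\,\omega(H)$ (using that $g-1\in D[G]\,\omega(H)$ if and only if $g\in H$), but you never establish it, and even granting it you terminate either in a finite group or in an infinite perfect section, so the whole weight falls on the perfect case.

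The perfect case, in turn, rests on a false auxiliary lemma. The step ``being finitely generated, this idempotent ideal is generated by an idempotent, $\omega=D[G]e$'' invokes the commutative fact (Nakayama plus the determinant trick) that a finitely generated idempotent ideal is generated by an idempotent; in a noncommutative ring this fails. For example, in the upper triangular matrix ring $S$ of $2\times 2$ matrices over a field $k$, the two-sided ideal $I=ke_{11}+ke_{12}$ satisfies $I^2=I$ and is finitely generated, yet $Se$ is one-dimensional over $k$ for every $e\in I$, so $I\neq Se$ for any idempotent $e$, and no $e\in I$ satisfies $xe=x$ for all $x\in I$ (note $e_{12}e=0$ for every $e\in I$). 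So the element $f=1-e$ on which your annihilator contradiction depends need not exist. A genuine repair must use the semiprimary structure: $J:=J(D[G])$ is nilpotent, so $\varepsilon(J)$ is a nilpotent ideal of $D$, forcing $J\subseteq\omega$; then $\omega/J$ is generated by a central idempotent of the semisimple quotient $D[G]/J$ and idempotents lift modulo $J$---but the support/$G$-invariance contradiction you run is then only available in $D[G]/J$, where ``finite support'' no longer makes sense, and bridging that gap is exactly the delicate part of Connell's original argument that your outline defers to him. In short: correct skeleton, correct reductions, but a genuine and self-acknowledged gap in the exhaustiveness of the case analysis, compounded by an unjustified idempotent-generation step in the one case meant to close it.
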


\subsection{Principal ideal group rings}

If $\mathcal{A}$ and $\mathcal{B}$ are two classes of groups, then we say 
that a group $G$ is \emph{$\mathcal{A}$-by-$\mathcal{B}$} if there exists a 
normal subgroup $N$ of $G$, such that $N \in \mathcal{A}$ and $ G/N \in \mathcal{B} $.
For a finite group $G$, we shall say that \emph{$G$ is a $p$-group} if the 
order of $G$ is a power of the prime number $p$, and
that \emph{$G$ is a $p^{\prime}$-group}
if the order of $G$ is relatively prime to $p$.

For finite groups, the
equivalence (ii)$\Longleftrightarrow$(iii) in the theorem below follows immediately from \cite[Theorem 3]{Dorsey} (see also \cite[Theorem 4.1]{PassmanPaper}).
Using the exact same proof technique as in \cite{Dorsey,PassmanPaper}, it is also possible to establish the equivalence (i)$\Longleftrightarrow$(iii).

\begin{The}[Passman, Dorsey]\label{thm:Passman} 
Let $K$ be a division ring, let $G$ be a finite group, and consider the group ring $K[G]$.
The following three assertions are equivalent:
\begin{enumerate}[{\rm (i)}]
	\item $K[G]$ is a left principal ideal ring;
	\item $K[G]$ is a right principal ideal ring;
	\item $\Char(K) = 0$, or\\
	$\Char(K) = p>0$ and $G$ is	$p^{\prime}$-by-cyclic $p$.
\end{enumerate}
\end{The}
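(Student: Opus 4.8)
The plan is to reduce the three-way equivalence to a single, left--right symmetric statement about the module structure of $K[G]$, and only then to carry out the group-theoretic analysis. First I would record that $K[G]$ is artinian: since $K$ is a division ring it is artinian and $G$ is finite, so Connell's theorem (Theorem~\ref{thm:Connell}) applies. The organizing principle is Nakayama's classical characterization of artinian principal ideal rings as the \emph{serial} rings (those whose indecomposable projective modules, on each side, are uniserial). Crucially, $K[G]$ is a symmetric, hence self-injective, algebra; for self-injective artinian rings the notions of left serial, right serial and serial coincide. Combined with the fact that a one-sided artinian principal ideal ring is serial on the corresponding side, this collapses (i), (ii) and ``$K[G]$ is serial'' into a single condition. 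It therefore suffices to prove that $K[G]$ is serial if and only if (iii) holds, and this simultaneously yields (i)$\Leftrightarrow$(ii).

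For the forward implication (iii)$\Rightarrow$ serial I would argue in two cases. If $\Char(K)=0$, then $|G|\cdot 1_K$ is invertible, so by Maschke's theorem $K[G]$ is semisimple; a semisimple ring is trivially serial, since its indecomposable projectives are simple. If $\Char(K)=p>0$ and $G$ is $p^{\prime}$-by-cyclic $p$, I would first observe that this condition is equivalent to $G$ possessing a normal $p$-complement $N$ (a $p^{\prime}$-group) with cyclic Sylow $p$-subgroup $P$, so that $G = N \rtimes P$ and $K[G]$ is a crossed product $K[N] * P$. Since $N$ is a $p^{\prime}$-group, $K[N]$ is semisimple, say $K[N] \cong \prod_i M_{n_i}(D_i)$, and $P$ acts by permuting these Wedderburn blocks. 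Using Clifford theory to pass to the stabilizer of a block and Morita equivalence to strip off the matrix factor, each indecomposable block of $K[G]$ reduces to a crossed product of a division ring with a cyclic $p$-group; in characteristic $p$ such an algebra is local with radical generated by a single element, hence uniserial. Assembling the blocks shows that $K[G]$ is serial.

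For the converse I would prove the contrapositive in characteristic $p$, showing that the failure of (iii) destroys seriality in one of two ways. If the Sylow $p$-subgroup is not cyclic, it contains a copy of $C_p \times C_p$, and then $K[C_p\times C_p]\cong K[x,y]/(x^p,y^p)$ is local with non-principal maximal ideal, hence not serial; I would transfer this obstruction up to $G$ using that $K[G]$ is free over the subgroup algebra together with the relative projectivity afforded by the Sylow subgroup. If the Sylow $p$-subgroup is cyclic but $G$ has no normal $p$-complement, I would invoke the theory of blocks of cyclic defect: each such block is a Brauer tree algebra, and such an algebra is uniserial precisely when its Brauer tree is a line; the absence of a normal $p$-complement forces a tree that is not of this shape, producing an indecomposable projective that fails to be uniserial.

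The main obstacle is exactly the characteristic-$p$ structural analysis with cyclic Sylow subgroup: pinning down the precise point at which the normal $p$-complement becomes necessary, that is, controlling the shape of the Brauer trees of the blocks in terms of the embedding of the Sylow subgroup and its normalizer. This is where the genuine content of the cited Passman--Dorsey result lies, and I expect the argument to require either the full Brauer-tree description of cyclic-defect blocks or an equally delicate direct computation inside the crossed product $K[N]*P$; the semisimple and non-cyclic-Sylow cases, by contrast, are comparatively routine.
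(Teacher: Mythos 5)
Your proof rests on a false ``organizing principle'': for artinian rings, being a principal ideal ring is \emph{not} equivalent to being serial. Only one implication holds (an artinian PIR is serial); the converse fails even for symmetric algebras, indeed even for group algebras. Take $K[S_3]$ with $\Char(K)=3$. Writing $S_1,S_2$ for the trivial and sign modules and $P_1,P_2$ for their projective covers, both $P_1$ and $P_2$ are uniserial, with Loewy series $[S_1,S_2,S_1]$ and $[S_2,S_1,S_2]$ (this is the Brauer tree algebra of a star --- equivalently a line --- with two edges), so $K[S_3]$ is serial and symmetric. But it is \emph{not} a principal ideal ring: the left ideal $\mathrm{soc}(P_1)\oplus\mathrm{rad}(P_2)$ has top $S_1\oplus S_1$, whereas any cyclic left module is of the form $K[S_3]/L$ and so has top a quotient of $K[S_3]/J(K[S_3])\cong S_1\oplus S_2$, containing each simple at most once. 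This single example breaks your argument in both directions. Forward: proving that $K[G]$ is serial (which is all your Clifford-theoretic analysis yields) does not establish (i) or (ii). Backward: $S_3$ has a cyclic Sylow $3$-subgroup and no normal $3$-complement, yet its Brauer tree \emph{is} a star and the algebra \emph{is} serial, so your claim that the absence of a normal $p$-complement forces a tree ``not of this shape'' is false (for $p$-solvable groups with cyclic Sylow $p$-subgroup the Brauer tree is always a star; also, a Brauer tree algebra is serial exactly when the tree is a star with central exceptional vertex, not when it is a line). Note that all of this is consistent with the theorem itself: $S_3$ is not $3'$-by-cyclic $3$, so the theorem correctly predicts that $K[S_3]$ is not a PIR in characteristic $3$, while your serial criterion would wrongly certify that it is.

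The invariant that actually characterizes artinian PIRs is given by the Asano--K\"{o}the structure theorem: an artinian ring is a principal ideal ring if and only if it is a finite direct product of full matrix rings over local uniserial rings; in particular, every block of an artinian PIR contains exactly \emph{one} simple module. That, and not seriality, is the dividing line your characteristic-$p$ analysis must detect ($K[S_3]$ in characteristic $3$ fails it: one block, two simples). Your forward direction is repairable along exactly these lines: if a block is Morita equivalent to the local uniserial ring $D \ast C_{p^k}$, then --- finitely generated projectives over a local ring being free --- the block is \emph{isomorphic} to a full matrix ring over that local uniserial ring, hence is a PIR; this sharpening, not seriality, is what the condition ``$p'$-by-cyclic $p$'' buys. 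Your converse, however, cannot be repaired within your framework, because it tests a strictly weaker property; a genuine converse must extract consequences from the full principal ideal hypothesis rather than from seriality. For the record, the paper itself does not prove this theorem: it quotes it from Dorsey (Theorem 3) and Passman (Theorem 4.1), so there is no internal proof to compare against, but judged on its own terms your proposal contains the gap described above.
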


\begin{Rem}\label{Rem:PrepForDorsey}
(a) Let $I$ be an ideal of a ring $S$ and let $s+I$ be an idempotent element of $S/I$.
Following \cite[p. 301]{AndFul} we say that $s+I$ can be \emph{lifted (to $u$) modulo $I$} in case there is an idempotent $u\in S$ such that $s+I=u+I$.
We say that \emph{idempotents lift modulo $I$} in case every idempotent in $S/I$ can be lifted to an idempotent in $S$.

\noindent (b) A ring $S$ is said to be \emph{local}
if $S$ has a unique maximal left ideal,
or equivalently,
if $S$ has a unique maximal right ideal.
If $S$ is a local ring, then its unique maximal left ideal coincides
with its unique maximal right ideal and with the Jacobson radical $J(S)$.
We will let $(S,M)$ denote a local ring $S$ together with its maximal ideal $M$.

\noindent (c) Recall from \cite[p. 397]{Dorsey} that, given a unital local artinian principal ideal ring $R$, a finite group $G$ is said to be \emph{$R$-admissible}
if $|G|\cdot 1_{R/J(R)} \in U(R/J(R))$ and each centrally primitive idempotent of $(R/J(R))[G]$
can be lifted to a centrally primitive idempotent of $(R/J(R)^2)[G]$.
\end{Rem}

For finite groups, the following result follows immediately from \cite[Theorem 4]{Dorsey}.

\begin{The}[Dorsey]\label{thm:DorseyFinite}
Let $(R,M)$ be a unital local artinian principal ideal ring, let $G$ be a finite group, and consider the group ring $R[G]$.
The following two assertions are equivalent:
\begin{enumerate}[{\rm (i)}]
	\item $R[G]$ is a principal ideal ring;
	\item $\Char(R/M) = 0$: If $R$ is not a division ring, then $G$ is an $R$-admissible group. \\
	$\Char(R/M) = p>0$: $G$ is $p^{\prime}$-by-cyclic $p$.
	If $R$ is not a division ring, then $G$ is an $R$-admissible group.
\end{enumerate}
\end{The}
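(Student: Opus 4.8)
The plan is to read off the statement as the finite-group specialization of Dorsey's general characterization \cite[Theorem 4]{Dorsey}. The natural organizing principle is the dichotomy according to whether the local ring $R$ is a division ring, i.e. whether $M = 0$. First I would record that, since $(R,M)$ is local, the residue ring $\bar R := R/M = R/J(R)$ is a division ring, and that the question of whether $R[G]$ is a principal ideal ring is governed both by the semisimple-or-not quotient $\bar R[G]$ and by how algebraic structure lifts from $\bar R[G]$ through $(R/M^2)[G]$ up to $R[G]$.

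When $R$ is a division ring we have $M = 0$, so $R = \bar R$ and the admissibility clause is vacuous; here I would simply note that the statement collapses to Theorem~\ref{thm:Passman}. In characteristic zero, Maschke's theorem makes $\bar R[G]$ semisimple, hence a finite product of matrix rings over division rings, each of which is a principal ideal ring, so no condition on $G$ is needed. In prime characteristic $p$ the criterion that $G$ be $p^{\prime}$-by-cyclic $p$ is precisely assertion (iii) of Theorem~\ref{thm:Passman}, which is exactly what characterizes $\bar R[G]$ as a principal ideal ring.

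When $R$ is not a division ring, i.e. $M \neq 0$, the extra input is the $R$-admissibility condition of Remark~\ref{Rem:PrepForDorsey}(c), which controls the lifting of centrally primitive idempotents from $\bar R[G]$ to $(R/M^2)[G]$ and thereby ensures that each block of $R[G]$ is itself a principal ideal ring. The main obstacle I anticipate is verifying that the hypotheses of \cite[Theorem 4]{Dorsey}, stated there for a general group, reduce cleanly to the two displayed characteristic cases once $G$ is assumed finite --- in particular, confirming that the idempotent-lifting requirement behaves correctly and interacts as claimed with the $p^{\prime}$-by-cyclic $p$ condition in prime characteristic, where admissibility additionally forces $|G| \cdot 1_{\bar R} \in U(\bar R)$. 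Once this compatibility is checked, the equivalence (i)$\Longleftrightarrow$(ii) follows immediately by substituting the finite-group data into Dorsey's theorem.
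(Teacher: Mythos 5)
Your proposal takes essentially the same route as the paper, which offers no independent argument but simply observes that for finite groups the statement follows immediately from \cite[Theorem 4]{Dorsey}, with the division-ring case being exactly Theorem~\ref{thm:Passman}. Your case split on whether $M=0$, and your remark that $R$-admissibility forces $|G| \cdot 1_{R/M} \in U(R/M)$ (cf.\ Remark~\ref{Rem:PrepForDorsey}(c)), are consistent with how the paper frames and uses the result.
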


\subsection{Pure projectivity and pure semisimplicity}

Let $S$ be a unital ring. A short exact sequence
\begin{displaymath}
\xymatrix@1{
\mathcal{E} : 0 \ar[r]^{} & A \ar[r]^{f} & B \ar[r]^{g} & C \ar[r]^{} & 0
}
\end{displaymath}
of left (resp. right) $S$-modules
is said to be \emph{pure exact}
if $D \otimes_S \mathcal{E}$ (resp. $\mathcal{E} \otimes_S D$)
is an exact sequence (of abelian groups)
for any right (resp. left) $S$-module $D$.

A submodule $P$ of a left
$S$-module $M$
is said to be a \emph{pure submodule} of $M$ if and only if the following holds:
For any $m$-by-$n$ matrix $A=(a_{ij})$ with entries in $S$,
and any set $y_1,\ldots,y_m$ of elements of $P$,
if there exist elements $x_1,\ldots,x_n$ in $M$
such that
$\sum_{j=1}^n a_{ij} x_j = y_i$ for $i \in \{1,\ldots,m\}$
then there also exist elements $x_1', \ldots, x_n'$ in $P$ such that
$\sum_{j=1}^n a_{ij} x_j' = y_i$ for $i \in \{1,\ldots,m\}$.
Pure submodules of right $S$-modules are defined analogously.

\begin{Rem}
$\mathcal{E}$ is pure exact if and only if $f(A)$ is a pure submodule of $B$.
\end{Rem}

A module is said to be \emph{pure projective} if it is projective with respect to 
pure exact sequences.
We recall the following characterization of pure semisimplicity from \cite[§53.6]{Wisbauer1991} and \cite[Theorem~13]{HuisgenZimmermann}.

\begin{Pro}\label{prop:CharPureSS}
Let $S$ be a unital ring. The following three assertions are equivalent:
\begin{enumerate}[{\rm (i)}]
	\item $S$ is left (resp. right) pure semisimple;
	\item Every left (resp. right) $S$-module is pure projective;
	\item Every left (resp. right) $S$-module is a direct sum of indecomposable modules.
\end{enumerate}
\end{Pro}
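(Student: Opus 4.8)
The plan is to route the argument through two standard facts about purity and then to treat separately the implications that merely reorganise existing structure and the one implication that genuinely produces a new decomposition.

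\textbf{Tools.} First I would record two well-known facts. (1) Every finitely presented left $S$-module $F$ is pure projective: presenting $F$ as the cokernel of a map $S^{m}\to S^{n}$ and feeding the resulting finite system of equations into the defining property of a pure exact sequence $0\to A\to B\xrightarrow{g}C\to 0$ lets one lift any $h\colon F\to C$ to a map $F\to B$. (2) A left $S$-module is pure projective if and only if it is a direct summand of a direct sum of finitely presented modules, and moreover every left $S$-module $M$ admits a \emph{pure projective presentation}: a pure exact sequence $0\to K\to\bigoplus_{i}F_{i}\to M\to 0$ in which each $F_{i}$ is finitely presented. The required epimorphism is assembled from the family of all homomorphisms from finitely presented modules into $M$, and its purity is a direct check on finite linear systems.

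\textbf{Formal implications.} With these tools the structure-preserving implications are routine. For (i)$\Rightarrow$(ii) I would use that a left pure semisimple ring is known to be left noetherian (indeed left artinian), so that finitely generated and finitely presented modules coincide; hence by (i) every left $S$-module is a direct sum of finitely presented modules, each summand is pure projective by fact (1), and the class of pure projective modules is closed under arbitrary direct sums, giving (ii). Conversely, if every module is pure projective, then fact (2) shows that the pure projective presentation of an arbitrary $M$ splits, so (ii) is equivalent to the assertion that every left $S$-module is a direct summand of a direct sum of finitely presented modules.

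\textbf{The substantial step and the main obstacle.} What remains is to upgrade ``direct summand of a direct sum of finitely presented modules'' to an honest decomposition into finitely generated, indeed indecomposable, pieces; this is where (ii)$\Rightarrow$(i) and the links with (iii) acquire their content, and I expect it to be the main obstacle. The key input, which I would import from the cited sources rather than reprove, is the decomposition theory of pure projective modules: under these hypotheses the finitely presented modules split into indecomposables with local endomorphism rings, so that the Krull--Remak--Schmidt--Azumaya theorem and the exchange property govern the direct summands of $\bigoplus_{i}F_{i}$. This yields that every pure projective module, hence under (ii) every module, is a direct sum of indecomposable finitely generated modules, establishing (iii) and (i) at once. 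The last arrow (iii)$\Rightarrow$(ii) proceeds along the same lines once one invokes the accompanying boundedness statement, namely that in the presence of (iii) the indecomposable summands are finitely generated; they are then finitely presented and pure projective, and their direct sums recover (ii).
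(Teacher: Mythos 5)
The first thing to note is that the paper does not prove this proposition at all: it is stated as a recollection from \cite[\S 53.6]{Wisbauer1991} and \cite[Theorem~13]{HuisgenZimmermann}, so there is no in-paper argument to compare yours against. Measured against those sources, the parts of your outline that you actually carry out are correct and match the standard architecture: finitely presented modules are pure projective, Warfield's characterization of pure projectives as summands of direct sums of finitely presented modules together with the existence of pure projective presentations, closure of pure projectives under direct sums, and the resulting reformulation of (ii). Your route for (i)$\Rightarrow$(ii) via ``pure semisimple $\Rightarrow$ left noetherian'' is also legitimate and non-circular: that fact is the paper's Proposition~\ref{prop:FaithWalker}, a Faith--Walker type result about injective modules whose proof does not pass through the present equivalences.

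The genuine gap is in what you call the substantial step, and you should be clear that it is not a matter of importing one auxiliary lemma. The two statements you propose to import --- that under (ii) every finitely presented module splits into indecomposables with local endomorphism rings, and that under (iii) the indecomposable modules are finitely generated --- are not off-the-shelf facts about pure projective modules over arbitrary rings (already over $\Z$ the finitely presented module $\Z$ is indecomposable with non-local endomorphism ring, so the first statement genuinely depends on the hypothesis); they are, together with your formal steps, the entire mathematical content of (ii)$\Rightarrow$(iii)$\Rightarrow$(i) and of (iii)$\Rightarrow$(i). Their proofs require the $\Sigma$-pure-injectivity decomposition machinery (Gruson--Jensen, Zimmermann) and, for (iii), the Zimmermann-Huisgen theorem on rings whose modules are direct sums of indecomposables; citing them amounts to citing the proposition itself. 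Granted those imports, your use of Krull--Remak--Schmidt--Azumaya and exchange-property arguments to pass from the finitely presented building blocks to arbitrary direct summands is sound, so your proposal is a faithful reconstruction of how the cited sources argue --- and it is at least as detailed as the paper, which cites the same sources for the whole statement --- but as a self-contained blind proof it is incomplete precisely at its core.
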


By the definitions it is clear that any left (resp. right) K\"{o}the ring is necessarily left (resp. right) pure semisimple.
As it turns out, for commutative rings the converse also holds.
Indeed, we recall the following result from \cite[Theorem 3]{Girvan1973}.

\begin{The}[Girvan]\label{thm:Girvan}
Let $S$ be a unital commutative ring.
Then $S$ is a K\"{o}the ring
if and only if
every $S$-module is pure projective.
\end{The}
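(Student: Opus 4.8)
The plan is to treat the two implications separately, with essentially all of the work concentrated in the backward direction. The forward implication is immediate from what precedes the statement: if $S$ is a K\"othe ring, then every $S$-module is a direct sum of cyclic submodules, and since cyclic modules are finitely generated, $S$ is pure semisimple; hence, by Proposition~\ref{prop:CharPureSS}, every $S$-module is pure projective.

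For the converse, suppose every $S$-module is pure projective. By Proposition~\ref{prop:CharPureSS} this says precisely that $S$ is pure semisimple. In view of Theorem~\ref{thm:KCP}, it now suffices to prove that a commutative pure semisimple ring $S$ is an artinian principal ideal ring. I would first record the standard fact that pure semisimple rings are artinian, so that $S$ is commutative artinian and therefore decomposes as a finite product $S \cong S_1 \times \cdots \times S_n$ of local artinian rings $(S_i, M_i)$. Since $S$-modules correspond to tuples of $S_i$-modules and direct-sum decompositions respect this product, each factor $S_i$ is again pure semisimple; moreover $S$ is a principal ideal ring if and only if each $S_i$ is. This reduces the problem to the local case.

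The crux is then to show that a commutative local artinian pure semisimple ring $(S,M)$ has $M$ principal. Here I would invoke the fact that a ring is of finite representation type precisely when it is both left and right pure semisimple; as $S$ is commutative, its left and right modules coincide, so one-sided pure semisimplicity forces both sides, and $S$ is of finite representation type. It then remains to show that a commutative artinian local ring of finite representation type is a principal ideal ring, which I would prove by contraposition: if $M$ is not principal then $\dim_{S/M}(M/M^2) \geq 2$, and, choosing $x,y \in M$ whose images are linearly independent over $S/M$, one builds from the two-generated structure of $M/M^2$ an infinite family of pairwise non-isomorphic finitely generated indecomposable $S$-modules (a Kronecker-type family governed by the residue field), contradicting finite representation type. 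Hence $M$ is principal, $S$ is a principal ideal ring, and by Theorem~\ref{thm:KCP} (equivalently Theorem~\ref{thm:Kothe}) $S$ is a K\"othe ring.

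I expect the last step---producing the infinite family of indecomposables over a local ring with non-principal maximal ideal---to be the main obstacle, together with the bookkeeping needed to pass pure semisimplicity across the product decomposition and to justify the reduction to the residue-field level. The appeal to the characterization of finite representation type as two-sided pure semisimplicity is precisely what makes the commutativity hypothesis do real work, since it converts the one-sided hypothesis of the theorem into the symmetric condition needed to deploy the representation-theoretic dichotomy.
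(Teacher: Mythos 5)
First, a point of comparison: the paper does not prove this statement at all --- it is recalled verbatim from Girvan's thesis \cite[Theorem 3]{Girvan1973} --- so there is no internal argument to measure yours against; your proposal has to stand on its own. Its architecture is sound and assembled from legitimate ingredients: the forward implication is exactly as easy as you say; in the converse direction the chain ``every module pure projective $\Rightarrow$ pure semisimple (Proposition~\ref{prop:CharPureSS}) $\Rightarrow$ artinian (Proposition~\ref{prop:FaithWalker}) $\Rightarrow$ finite product of local artinian rings, each factor again pure semisimple (it is a quotient, so Lemma~\ref{lem:HomomorphicImage}(c) applies)'' is available inside the paper, the final appeal to Theorem~\ref{thm:KCP} is non-circular, and the import of the Auslander/Ringel--Tachikawa/Fuller--Reiten theorem (finite representation type $\Leftrightarrow$ left \emph{and} right pure semisimple), while heavyweight, is correct and is indeed what lets commutativity convert the one-sided hypothesis into something usable; that theorem can be found in the survey \cite{HuisgenZimmermann} that the paper already cites.

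The genuine gap is in the crux, which you defer: that a commutative local artinian ring $(S,M)$ of finite representation type has $M$ principal. As sketched, the construction has two problems. First, a ``Kronecker-type family governed by the residue field,'' i.e.\ modules $M_\lambda$ indexed by scalars $\lambda \in S/M$, is a \emph{finite} family whenever $S/M$ is a finite field, and nothing in the hypotheses excludes that; to contradict finite representation type one must instead use the Kronecker-style indecomposables of unbounded length (dimension vectors $(n,n+1)$, realized as cokernels of $(n+1)\times n$ matrices with entries $x$ and $y$), which exist over every field. Second, reducing to ``the two-generated structure of $M/M^2$'' implicitly replaces $S$ by $S/M^2$ (harmless) and then treats the result as an algebra over its residue field so that quiver-type representation theory applies; in mixed characteristic this fails --- e.g.\ $S=\mathbb{Z}[x]/(p^2,px,x^2)$ is local artinian with $M^2=0$ and $\dim_{S/M}M=2$, yet contains no copy of its residue field $\mathbb{F}_p$ --- so the matrix construction must be carried out and its indecomposability and pairwise non-isomorphism verified over $S$ itself, or one must cite the classical result directly (it is contained, for instance, in the representation-finite versus K\"{o}the analysis of \cite{FazelNasIsf}, which the paper cites). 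Until one of these is supplied, the proof is incomplete at precisely the point you yourself flag as the main obstacle.
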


\section{General observations}\label{Sec:General}

In this section we record some observations on general rings and group rings.
We begin by recalling the following result from \cite[§53.6]{Wisbauer1991}

\begin{Pro}\label{prop:FaithWalker}
Let $S$ be a unital ring.
If $S$ is a left (resp. right) pure semisimple ring, then $S$ is a left (resp. right) artinian ring.
\end{Pro}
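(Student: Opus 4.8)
The plan is to extract from pure semisimplicity two one-sided finiteness conditions — that $R$ is left perfect and that $R$ is left Noetherian — and then to merge them into left artinianness via the Hopkins--Levitzki theorem. I work throughout with left modules; the right-handed statement follows by the symmetric argument applied to right modules, and no left--right symmetry of the hypothesis is ever needed. By Proposition~\ref{prop:CharPureSS} I may freely use that every left $R$-module is pure projective and that every left $R$-module is a direct sum of indecomposable modules. To see that $R$ is left perfect, I would take an arbitrary flat left module $M$, choose a free module $F$ with an epimorphism $\pi \colon F \to M$ and kernel $K$, and note that flatness of $M$ forces the sequence $0 \to K \to F \to M \to 0$ to be pure exact. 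Since $M$ is pure projective, the identity of $M$ lifts through $\pi$, so $\pi$ splits and $M$ is a direct summand of $F$, hence projective. Thus every flat left module is projective, and by Bass's characterisation of left perfect rings, $R$ is left perfect; in particular $R/J(R)$ is semisimple and $J(R)$ is left $T$-nilpotent, hence nil.

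The real content lies in the second pillar, where I would show that $R$ is left Noetherian, and this is the step I expect to be the main obstacle. Here I would feed the decomposition in Proposition~\ref{prop:CharPureSS}(iii) into the Matlis--Papp criterion. Let $E$ be an injective left $R$-module. By pure semisimplicity, $E = \bigoplus_{i} E_i$ with each $E_i$ indecomposable; since each $E_i$ is a direct summand of the injective module $E$, each $E_i$ is itself injective. Thus \emph{every} injective left $R$-module is a direct sum of indecomposable injective modules, which by the Matlis--Papp theorem (see \cite[§53.6]{Wisbauer1991} and \cite{Faith}) is equivalent to $R$ being left Noetherian. The delicate point is precisely that pure semisimplicity only hands us a decomposition into indecomposables, and one must observe that injectivity is inherited by the summands in order to invoke this rather deep theorem.

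Finally I would assemble the two pillars. Being left Noetherian, $R$ has no nil one-sided ideal that fails to be nilpotent (Levitzki's theorem), so the nil ideal $J(R)$ is in fact nilpotent. Together with semisimplicity of $R/J(R)$ this makes $R$ semiprimary, and a semiprimary left Noetherian ring is left artinian by the Hopkins--Levitzki theorem. This last combination is routine; the substance of the proof is the two separate reductions above. The right-handed claim is obtained verbatim, replacing ``left'' by ``right'' throughout.
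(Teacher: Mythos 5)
Your proposal is correct, but it is worth noting that the paper does not prove this proposition at all: it is simply recalled from \cite[\S 53.6]{Wisbauer1991}, where it is established via functor-ring machinery. Your argument is a genuine, self-contained alternative built from classical one-sided theorems, and every step checks out. The perfect-ring pillar is sound: a short exact sequence ending in a flat module is automatically pure exact, pure projectivity of the flat module splits it, so flat implies projective and Bass's Theorem P gives left perfectness. The Noetherian pillar is also sound: direct summands of injectives are injective, so pure semisimplicity forces every injective left module to be a direct sum of indecomposable injectives, and the converse direction of the Matlis--Papp theorem (due to Papp; see e.g.\ \cite[Theorem 25.6]{AndFul}, rather than \cite[\S 53.6]{Wisbauer1991}, which is a slight misattribution in your sketch) yields left Noetherian. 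The assembly is correct: left $T$-nilpotence of $J(S)$ gives nilness, Levitzki's theorem upgrades this to nilpotence in the Noetherian setting, so $S$ is semiprimary, and Hopkins--Levitzki finishes. You are also right that no left--right symmetry is used, since Bass's flat-implies-projective criterion, Papp's theorem, Levitzki's theorem and Hopkins--Levitzki are all applied on one side only. What the two approaches buy: the paper's citation is the shortest path and keeps the exposition lean; your route makes the result accessible without functor-ring techniques, at the cost of invoking four substantial classical theorems, and it extracts strictly more information along the way (explicit left perfectness and left Noetherianity of $S$).
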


\begin{Lem}\label{lem:HomomorphicImage}
Let $S$ and $T$ be unital rings, and suppose that $\varphi : S \to T$ is a surjective ring homomorphism.
The following three assertions hold:
\begin{enumerate}[{\rm (a)}]

	\item If $S$ is a left (resp. right) K\"{o}the ring, then $T$ is also a left (resp. right) K\"{o}the ring.
	\item If $S$ is a principal left (resp. right) ideal ring, then $T$ is also a principal left (resp. right) ideal ring.
	\item If $S$ is a left (resp. right) pure semisimple ring, then $T$ is also a left (resp. right) pure semisimple ring.
	
\end{enumerate}
\end{Lem}

\begin{proof}
(a)
Suppose that $S$ is a left K\"{o}the ring.
Let $M$ be an arbitrary left $T$-module.
By restriction of scalars, $M$ may be viewed as a left $S$-module and by assumption $M$ decomposes into a direct sum of cyclic submodules. It is not difficult to see that $M$, viewed as a left $T$-module, also decomposes into a direct sum of cyclic submodules.
Thus, $T$ is a left K\"{o}the ring.
The right-handed case is treated analogously.

(b) Suppose that $S$ is a principal left ideal ring.
Let $I$ be an arbitrary left ideal of $T$.
The set $\varphi^{-1}(I)$
is a left ideal of $S$.
By assumption, there is some $x \in S$ such that $\varphi^{-1}(I) = Sx$,
and by surjectivity of $\varphi$ we get that 
$I = \varphi(Sx) = \varphi(S) \varphi(x) = T \varphi(x)$.
This shows that $T$ is a principal left ideal ring.
The right-handed case is treated analogously.

(c) Suppose that $S$ is a left pure semisimple ring. 
We may identify $T$ with $S/\ker(\varphi)$.
Let $M$ be an arbitrary left $S/\ker(\varphi)$-module, and let
$\cdot$ denote the left scalar action 
of $S/\ker(\varphi)$ on $M$. 
By restriction of scalars, with respect to the canonical ring homomorphism $\psi : S \to S/\ker(\varphi)$, $M$ may be viewed as a left $S$-module.
We let $\star$ denote the corresponding left scalar action 
of $S$ on $M$.
By assumption (cf.~Proposition~\ref{prop:CharPureSS}), there is some index set $I$ such that
$M = \oplus_{i\in I} M_i$,
where $M_i$ is a finitely generated left $S$-module for every $i\in I$.
Note that $\ker(\varphi) \subseteq \Ann_S(M)$, since $\ker(\varphi) \star M = \psi(\ker(\varphi)) \cdot M = \{0\}$.
Thus, $M$ may be viewed as a left $S/\ker(\varphi)$-module by defining 
the left scalar action
$S/\ker(\varphi) \times M \to M, (s+\ker(\varphi),m) \mapsto (s+\ker(\varphi)) \# m := s \star m$.
Clearly, $(s+\ker(\varphi)) \# m = (s+\ker(\varphi)) \cdot m$ for every $s \in S$ and $m\in M$.
Now, using that $M_i$ is a finitely generated $S$-module, one easily verifies that $M_i$ is also a finitely generated $S/\ker(\varphi)$-module, for every $i\in I$.
By Proposition~\ref{prop:CharPureSS}, this shows that $S/\ker(\varphi)$, and hence $T$, is left pure semisimple.
The right-handed case is treated analogously.
\end{proof}

\begin{Lem}\label{lemma:AbelInj}
The following two assertions hold:
\begin{enumerate}[{\rm (a)}]
    \item Let $S$ and $T$ be unital rings, and suppose that $\varphi : S \to T$ is an injective ring homomorphism.
If $T$ is abelian, then $S$ is also abelian.
In particular, any subring of an abelian ring is abelian in itself.
    
    \item
    Let $S = \Pi_{i=1}^n S_i$ be a direct product of unital rings $S_1,\ldots,S_n$.
		Then $S$ is abelian if and only if $S_i$ is abelian for every $i\in \{1,\ldots,n\}$.
\end{enumerate}

\end{Lem}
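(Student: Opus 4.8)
The plan is to handle both parts by a direct analysis of idempotents, since a ring is abelian precisely when each of its idempotents is central.

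For part~(a), I would begin with an arbitrary idempotent $e \in S$, so that $e^2 = e$. Applying $\varphi$ gives $\varphi(e)^2 = \varphi(e^2) = \varphi(e)$, so $\varphi(e)$ is an idempotent of $T$; since $T$ is abelian, $\varphi(e)$ is central in $T$. To transport this centrality back to $S$, I would fix an arbitrary $s \in S$ and compute $\varphi(es) = \varphi(e)\varphi(s) = \varphi(s)\varphi(e) = \varphi(se)$, where the middle equality uses that $\varphi(e)$ is central. Injectivity of $\varphi$ then yields $es = se$, so $e$ is central in $S$, proving that $S$ is abelian. The final assertion of part~(a) follows by taking $\varphi$ to be the inclusion homomorphism of a subring.

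For part~(b), I would first record the elementary facts that an element $(s_1,\ldots,s_n)$ of $S = \prod_{i=1}^n S_i$ is idempotent if and only if each $s_i$ is idempotent in $S_i$, and is central in $S$ if and only if each $s_i$ is central in $S_i$. The backward implication is then immediate: if each $S_i$ is abelian and $e = (e_1,\ldots,e_n)$ is an idempotent of $S$, then each $e_i$ is a central idempotent of $S_i$, whence $e$ is central and $S$ is abelian.

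The forward implication is where I expect the only mild subtlety, because the naive coordinate embedding $S_i \hookrightarrow S$ is not unital and so part~(a) does not apply directly. Instead, given an idempotent $f \in S_i$, I would consider the element $e \in S$ having $f$ in coordinate $i$ and $1_{S_j}$ in every other coordinate $j$; this is an idempotent of $S$, and hence central by hypothesis. Testing centrality of $e$ against the element of $S$ supported in coordinate $i$ by an arbitrary $s_i \in S_i$ (and zero elsewhere) forces $f s_i = s_i f$, so $f$ is central in $S_i$, proving that $S_i$ is abelian.
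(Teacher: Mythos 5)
Your proof is correct, and in substance it follows the same idempotent-chasing strategy as the paper's: for (a) the paper also transports idempotents through $\varphi$, phrased via the image subring (an idempotent $u \in \varphi(S)$ lies in $\varphi(S)\cap Z(T)\subseteq Z(\varphi(S))$, so $\varphi(S)$, and hence $S\cong\varphi(S)$, is abelian), and for the backward direction of (b) it likewise just checks idempotency and centrality coordinatewise. The one place you genuinely diverge is the forward direction of (b): the paper simply applies (a) to the natural embedding $\iota_i : S_i \to \prod_{j=1}^n S_j$, whereas you decline to do so on the grounds that $\iota_i$ is not unital, and instead pad your idempotent with identities in the other coordinates and test centrality directly against elements supported in coordinate $i$. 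Your caution is defensible: the lemma is stated for unital rings, and under the common convention that homomorphisms of unital rings preserve the identity, $\iota_i$ is not admissible input to (a) as stated. That said, neither the paper's proof of (a) nor yours ever uses $\varphi(1_S)=1_T$, so part (a) holds verbatim for arbitrary injective ring homomorphisms and the paper's shortcut is harmless in substance. Your route buys a self-contained argument immune to this conventions issue, at the cost of a few extra lines; the paper's buys brevity by implicitly relying on the fact that unitality is never needed.
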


\begin{proof}
(a)
Suppose that $T$ is abelian
and consider the subring $\varphi(S)$ of $T$.
Take an idempotent $u\in \varphi(S) \subseteq T$.
By assumption, we get that $u\in \varphi(S) \cap Z(T) \subseteq Z(\varphi(S))$.
Thus, $\varphi(S)$ is abelian.
Using that $S \cong \varphi(S)$ we conclude that $S$ is abelian.

(b)
The ''only if'' statement
follows immediately from (a) after considering the natural embedding $\iota_i : S_i \to \Pi_{i=1}^n S_i$, for each $i\in \{1,\ldots,n\}$.
Now, suppose that $S_i$ is abelian for every $i\in \{1,\ldots,n\}$.
Take an idempotent $u \in S$.
Then $u=(u_1,\ldots,u_n)$
where, by assumption, $u_i$ is a central idempotent of $S_i$, for every $i\in \{1,\ldots,n\}$.
Clearly, $u\in Z(S)$. 
This concludes the proof of the
''if'' statement.
\end{proof}

\begin{Pro}\label{prop:KoetheNec}
Let $R$ be a unital ring and let $G$ be a group.
The following two assertions hold:
\begin{enumerate}[{\rm (a)}]
	
	\item If the group ring $R[G]$ is a left (resp. right) pure semisimple ring,
	then
	$(R/I)[G/N]$ is a left (resp. right) pure semisimple ring for every proper ideal $I$ of $R$ and every normal subgroup $N$ of $G$.
	Furthermore, $(R/I)[G/N]$ is a left (resp. right) artinian ring.
	In particular, $R/I$, $R$ and $R[G]$ are
	left (resp. right) pure semisimple rings
	and
	left (resp. right) artinian rings, and $G$ is a finite group.

	\item If the group ring $R[G]$ is a left (resp. right) K\"{o}the ring,
	then
	$(R/I)[G/N]$ is a left (resp. right) K\"{o}the ring for every proper ideal $I$ of $R$ and every normal subgroup $N$ of $G$.
	Furthermore, $(R/I)[G/N]$ is a left (resp. right) artinian ring.
	In particular, $R/I$, $R$ and $R[G]$ are
	left (resp. right) K\"{o}the rings
	and
	left (resp. right) artinian rings, and $G$ is a finite group.
	
\end{enumerate}
\end{Pro}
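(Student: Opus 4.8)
The plan is to realize $(R/I)[G/N]$ as a homomorphic image of $R[G]$ and then invoke the transfer results of Lemma~\ref{lem:HomomorphicImage} together with the characterizations already recorded. First I would observe that the quotient maps $R \to R/I$, $r \mapsto r+I$, and $G \to G/N$, $g \mapsto gN$, induce a ring homomorphism
\[
\pi \colon R[G] \to (R/I)[G/N], \qquad \sum_{g\in G} r_g\, g \mapsto \sum_{g\in G} (r_g + I)\, (gN),
\]
where, after collecting terms, the coefficient of a coset $gN$ equals $\sum_{h\in gN}(r_h + I)$. Since both the ring map $R\to R/I$ and the group map $G\to G/N$ are surjective, $\pi$ is surjective. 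This is the only structural observation needed; the rest is a matter of applying earlier results.

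For part~(a), assuming $R[G]$ is left (resp. right) pure semisimple, Lemma~\ref{lem:HomomorphicImage}(c) applied to $\pi$ immediately yields that $(R/I)[G/N]$ is left (resp. right) pure semisimple, and then Proposition~\ref{prop:FaithWalker} shows that it is left (resp. right) artinian. For part~(b) I would argue identically, using Lemma~\ref{lem:HomomorphicImage}(a) to transfer the K\"{o}the property; artinianness then follows since every K\"{o}the ring is pure semisimple and hence, by Proposition~\ref{prop:FaithWalker}, artinian.

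The \emph{in particular} assertions are obtained by specializing $I$ and $N$. Taking $N = G$ collapses the group to the trivial group and gives $(R/I)[G/N] \cong R/I$, so $R/I$ inherits the relevant property; the further choice $I = \{0\}$ (which is a proper ideal whenever $R \neq 0$) recovers $R$ itself, while $I = \{0\}$ together with $N = \{e\}$ recovers $R[G]$. Finally, to see that $G$ is finite I would use that $R[G]$, being pure semisimple (directly in case~(a), and via the implication K\"{o}the $\Longrightarrow$ pure semisimple in case~(b)), is artinian by Proposition~\ref{prop:FaithWalker}; Connell's Theorem~\ref{thm:Connell} then forces both $R$ to be artinian and $G$ to be finite.

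I expect no serious obstacle here: once the surjection $\pi$ is in place the argument is essentially a bookkeeping exercise built entirely on the cited lemmas. The single point requiring genuine external input is the finiteness of $G$, which rests on Connell's theorem rather than on the pure-semisimple/K\"{o}the machinery itself; and the only place demanding a moment's care is to confirm that the degenerate choices of $I$ and $N$ (in particular $I = \{0\}$) are admissible under the standing hypothesis that $I$ be a proper ideal.
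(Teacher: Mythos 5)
Your proposal is correct and follows essentially the same route as the paper: you construct the same surjective ring homomorphism $R[G] \to (R/I)[G/N]$ induced by the two quotient maps, transfer the K\"{o}the and pure semisimple properties via Lemma~\ref{lem:HomomorphicImage}, and obtain artinianity and finiteness of $G$ from Proposition~\ref{prop:FaithWalker} and Theorem~\ref{thm:Connell}, exactly as the authors do. Your extra care with the degenerate choices of $I$ and $N$ (and the remark that $I=\{0\}$ is proper for $R\neq 0$) is a harmless elaboration of what the paper leaves implicit.
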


\begin{proof}
Consider the natural quotient maps $R \to R/I, \ r \mapsto \overline{r}$
and $G \to G/N, \ g \mapsto gN$.
Define a map $\varphi : R[G] \to (R/I)[G/N]$
by naturally extending the rule $\varphi(rg) = \overline{r}gN$, for $r\in R$ and $g\in G$.
Clearly, $\varphi$ is a surjective ring homomorphism.

Most of (a) and (b) now follow immediately from Lemma~\ref{lem:HomomorphicImage}.
The claims about artinianity follow from Proposition~\ref{prop:FaithWalker},
and the finiteness of $G$ follows from Theorem~\ref{thm:Connell}.
\end{proof}

\section{Abelian group rings over local rings}\label{Sec:local}

In this section we provide a characterization of abelian K\"{o}the group rings
over local rings (see Theorem~\ref{thm:localcase}). That result will be an essential ingredient
in the proof of our first main result in Section~\ref{Sec:MainResult}.

\begin{Lem}\label{lem:InvertDivisor}
Let $R$ be a unital ring and let $n>1$ be an integer.
The following two assertions are equivalent:
\begin{enumerate}[{\rm (i)}]
    \item $n \cdot 1_R \notin U(R)$;
    \item There is a prime divisor $q$ of $n$, and a proper ideal $I$ of $R$ such that $R/I$ is a domain with $\Char(R/I)=q$.
\end{enumerate}
\end{Lem}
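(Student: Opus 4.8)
The plan is to prove the two implications separately, with (ii)$\Rightarrow$(i) being routine and (i)$\Rightarrow$(ii) carrying the real content. For (ii)$\Rightarrow$(i), suppose there is a prime divisor $q$ of $n$ and a proper ideal $I$ with $R/I$ a domain and $\Char(R/I)=q$. Writing $n=qm$, the image of $n\cdot 1_R$ in $R/I$ equals $m\cdot(q\cdot 1_{R/I})=0$. Since $R/I$ is a nonzero domain, $0$ is not invertible in it, so the image of $n\cdot 1_R$ is a non-unit; as ring homomorphisms send units to units, this forces $n\cdot 1_R\notin U(R)$.

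For (i)$\Rightarrow$(ii), the first step is to pass from $n$ to a single prime. Factor $n=q_1^{a_1}\cdots q_k^{a_k}$, so that $n\cdot 1_R=\prod_{i}(q_i\cdot 1_R)^{a_i}$. Each integer multiple $q_i\cdot 1_R$ is central, and a product of central units is again a central unit; hence, since $n\cdot 1_R\notin U(R)$, at least one factor fails to be a unit, and we fix a prime divisor $q:=q_i$ of $n$ with $q\cdot 1_R\notin U(R)$. The second step records that the two-sided ideal $(q\cdot 1_R)R$ is proper: it is genuinely two-sided because $q\cdot 1_R$ is central, and were it all of $R$ then $q\cdot 1_R$ would admit a right inverse, which by centrality is also a left inverse, contradicting $q\cdot 1_R\notin U(R)$.

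The third step produces the domain and fixes its characteristic. In $R/(q\cdot 1_R)R$ one has $q\cdot 1=0$, and this quotient is nonzero; since $q$ is prime, its characteristic is exactly $q$. I would now pass to a further quotient that is a domain: choose an ideal $I\supseteq (q\cdot 1_R)R$ maximal among proper ideals, so that $R/I$ is simple and nonzero, and observe that the relation $q\cdot 1=0$ persists in $R/I$, whence its characteristic, dividing the prime $q$, must equal $q$. The main obstacle is precisely guaranteeing that this maximal quotient is a \emph{domain} rather than merely simple: in the commutative case the quotient by a maximal ideal is a field and the argument closes immediately, so the delicate point is the existence of a \emph{completely prime} ideal lying above $(q\cdot 1_R)R$, which is what secures a genuine domain of characteristic $q$ and completes the equivalence.
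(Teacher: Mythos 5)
Your direction (ii)$\Rightarrow$(i) and the first two steps of your (i)$\Rightarrow$(ii) are correct and essentially identical to the paper's argument: pass to a prime divisor $q$ of $n$ with $q\cdot 1_R\notin U(R)$ (using centrality of integer multiples of $1_R$), and note that the ideal $I=(q\cdot 1_R)R$ is two-sided and proper, with $q\cdot 1_{R/I}=0$ and hence $\Char(R/I)=q$. The gap you flagged at the end --- upgrading ``nonzero ring of characteristic $q$'' to ``domain of characteristic $q$'' --- is genuine, and moreover it \emph{cannot} be filled: implication (i)$\Rightarrow$(ii) as stated is false for noncommutative rings. Take $R=M_2(\mathbb{F}_2)$ and $n=2$. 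Then $n\cdot 1_R=0\notin U(R)$, but $R$ is simple, so its only proper ideal is $\{0\}$, and $R/\{0\}\cong M_2(\mathbb{F}_2)$ is not a domain; no completely prime ideal lies above $(2\cdot 1_R)R=\{0\}$. Your fallback of passing to a maximal proper ideal only produces a simple ring, and this example shows precisely that simple rings need not be domains.

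What you have uncovered is a flaw in the paper rather than in your own reasoning. The paper's proof takes $I=R\cdot q\cdot 1_R$ and asserts that ``it is easy to see that $R/I$ is a domain,'' which is false in general: already for $R=\mathbb{F}_2[x]/(x^2)$ and $q=2$ one gets $I=\{0\}$ and $R/I=R$, not a domain (and for $M_2(\mathbb{F}_2)$ the full statement, not just this proof step, fails). The correct repair is to weaken (ii) by deleting the word ``domain'': the equivalence ``$n\cdot 1_R\notin U(R)$ if and only if there exist a prime divisor $q$ of $n$ and a proper ideal $I$ of $R$ with $\Char(R/I)=q$'' is true, and your argument (minus the final unachievable step) proves it completely. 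This weaker form is also all the paper ever uses: in the proof of Lemma~\ref{lem:invertibility} only the properness of $I$ and the characteristic of $R/I$ enter, never the absence of zero-divisors, and in the (ii)$\Rightarrow$(i) application there one takes $I=M$, so $R/I$ is a division ring and in particular a domain anyway. In short: your proof of the corrected statement is complete; the statement with ``domain'' is unprovable because it is false.
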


\begin{proof}
(ii)$\Rightarrow$(i)
Suppose that $n=q\cdot n'$ for a prime number $q$, and that $I$ is a proper ideal of $R$ such that $\Char(R/I)=q$.
Seeking a contradiction, suppose that $n \cdot 1_R \in U(R)$, 
i.e. there is some $r\in R$ such that $r\cdot n \cdot 1_R = 1_R$.
Consider the natural map $\varphi : R \to R/I$.
Clearly,
$1_R + I = \varphi(1_R)
=\varphi(r\cdot n \cdot 1_R)
=\varphi(r\cdot q \cdot n' \cdot 1_R)
= \varphi(rq)\cdot \varphi(n' \cdot 1_R)
=\varphi(r)\cdot q \cdot \varphi(n' \cdot 1_R)
=\varphi(r) \cdot I
\subseteq I.$
This is a contradiction since $I$ is proper.
The right-handed case is treated analogously.

(i)$\Rightarrow$(ii)
Suppose that $n\cdot 1_R \notin U(R)$. 
There must exist a prime number $q$, such that $n=q\cdot n'$ and such that $q \cdot 1_R \notin U(R)$.
Consider the ideal $I=R \cdot q \cdot 1_R$ of $R$. 
Clearly, $I$ must be proper since $q \cdot 1_R \notin U(R)$.
Using that $q$ is prime, it is easy to see that $R/I$ is a domain and that $\Char(R/I)=q$.
The right-handed case is treated analogously.
\end{proof}

\begin{Lem}\label{lem:invertibility}
Let $(R, M)$ be a unital local ring and let $n>1$ be an integer.
The following two assertions hold:
\begin{enumerate}[{\rm (a)}]
	\item If $\Char(R/M)=0$, then $n \cdot 1_R \in U(R)$.

	\item Suppose that $\Char(R/M)=p>0$.
	Then $n \cdot 1_R \notin U(R)$ if and only if $p$ divides $n$.
\end{enumerate}
\end{Lem}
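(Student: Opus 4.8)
The goal is to prove Lemma~\ref{lem:invertibility}, concerning a unital local ring $(R,M)$ and an integer $n>1$. The plan is to exploit the fact that in a local ring, an element is a unit precisely when it lies outside the maximal ideal $M$; equivalently, $R/M$ is a division ring and the reduction map $R \to R/M$ detects invertibility. Since the key invertibility question concerns $n \cdot 1_R$, the entire problem reduces to understanding the image of $n \cdot 1_R$ in the division ring $R/M$, whose characteristic controls everything.

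For part~(a), suppose $\Char(R/M)=0$. First I would observe that the image of $n \cdot 1_R$ under $R \to R/M$ is $n \cdot 1_{R/M}$, which is nonzero because the characteristic is zero and $n>1$. Since $R/M$ is a division ring, every nonzero element is a unit, so $n\cdot 1_R + M \neq 0$ means $n \cdot 1_R \notin M$. In a local ring this immediately gives $n \cdot 1_R \in U(R)$, which is the desired conclusion.

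For part~(b), suppose $\Char(R/M)=p>0$. The same reduction applies: $n \cdot 1_R \notin U(R)$ if and only if $n \cdot 1_R \in M$, which holds if and only if $n \cdot 1_{R/M}=0$ in $R/M$. Since $R/M$ has characteristic $p$, the element $n \cdot 1_{R/M}$ vanishes exactly when $p \mid n$. Chaining these equivalences yields the claim. Alternatively, one could package both parts by invoking Lemma~\ref{lem:InvertDivisor}: if $n\cdot 1_R \notin U(R)$ then there is a prime divisor $q$ of $n$ and a proper ideal $I$ with $R/I$ a domain of characteristic $q$; in a local ring one expects such a quotient to force $q=p$, giving $p \mid n$ for the forward direction of~(b), while part~(a) follows since characteristic zero of $R/M$ rules out any such prime quotient.

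I do not anticipate a serious obstacle, as the argument is essentially the standard characterization of units in a local ring together with elementary facts about the characteristic of a field or division ring. The only point requiring mild care is the justification that invertibility of $n \cdot 1_R$ in $R$ is genuinely equivalent to nonvanishing of its image in $R/M$; this rests on the characterization of $M$ as the set of non-units in a local ring (equivalently, $M = J(R)$ and $R/J(R)$ is a division ring), which is recalled in Remark~\ref{Rem:PrepForDorsey}(b). Once that equivalence is in hand, both parts are immediate.
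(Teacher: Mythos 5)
Your proof is correct, but it takes a genuinely more direct route than the paper. You reduce everything to the division ring $R/M$ via the standard fact that in a local ring the non-units are precisely the elements of $M$ (equivalently, $x \in U(R)$ if and only if $x + J(R) \in U(R/J(R))$, using $M = J(R)$), so that $n \cdot 1_R \in U(R)$ exactly when $n \cdot 1_{R/M} \neq 0$, and the characteristic of $R/M$ settles both parts at once. The paper instead derives the lemma from Lemma~\ref{lem:InvertDivisor}: if $n \cdot 1_R \notin U(R)$, that lemma produces a prime $q \mid n$ and a proper ideal $I$ with $R/I$ a domain of characteristic $q$; since every proper ideal of a local ring lies in $M$, the third isomorphism theorem gives $R/M \cong (R/I)/(M/I)$, whence $\Char(R/M)$ divides $q$, yielding a contradiction in part (a) and $p = q$ in part (b), while the ``if'' direction of (b) follows from the easy implication of Lemma~\ref{lem:InvertDivisor} applied with $I = M$. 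This is essentially the alternative you sketch in your second paragraph, so you have in effect both proofs. What each buys: the paper's route reuses a lemma valid for arbitrary unital rings and keeps the local statement as a corollary of it; yours is shorter and self-contained for the local case, at the cost of invoking the characterization of local rings by their non-units. One small point of precision: Remark~\ref{Rem:PrepForDorsey}(b) only records that the maximal one-sided ideals coincide with $J(R)$; the equivalence ``$x$ is a unit iff $x \notin M$'' needs the additional (standard) observation that units lift modulo the Jacobson radical, which you should cite or prove in one line rather than attribute to that remark.
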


\begin{proof}
We begin with a general observation.
If $n \cdot 1_R \notin  U(R)$, then by Lemma~\ref{lem:InvertDivisor} there is a prime divisor $q$ of $n$, and a proper ideal $I$ of $R$ such that $\Char(R/I)=q>0$.
Notice that $I \subseteq M \subseteq R$, since $M$ is maximal.
From the third isomorphism theorem we get that $R/M \cong (R/I)/(M/I)$.
Recall that $\Char(R/M)$ must divide $\Char(R/I)$.

(a)
Seeking a contradiction, suppose that $n \cdot 1_R \notin  U(R)$.
Using that $\Char(R/M)=0$ and $\Char(R/I)=q>0$, we get a contradiction.
Thus, $n \cdot 1_R \in  U(R)$.

(b)
The ''if'' statement follows immediately from Lemma~\ref{lem:InvertDivisor}.
Now we show the ''only if'' statement.
Suppose that $n \cdot 1_R \notin  U(R)$.
We use Lemma~\ref{lem:InvertDivisor} to find $q$ and $I$.
Using that $\Char(R/M)=p$ and $\Char(R/I)=q$ we get $p=q$,
and this shows that $p$ divides $n$.
\end{proof}

The following result follows immediately from \cite[Lemma 5]{Dorsey}.

\begin{Lem}\label{cor:pgroup}
Let $(R, M)$ be a unital 
local artinian principal ideal ring with $\Char(R/M)=p>0$, and let $G$ be a finite group.
If $R[G]$ is a principal ideal ring and $R$ is not a division ring, then $G$ is not a $p$-group.
\end{Lem}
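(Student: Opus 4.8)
The plan is to argue by contradiction, extracting the result directly from Dorsey's criterion (Theorem~\ref{thm:DorseyFinite}) together with the definition of $R$-admissibility. So, seeking a contradiction, I would suppose that $G$ is a $p$-group, say $|G| = p^k$ with $k \geq 1$.

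Since $(R,M)$ is a unital local artinian principal ideal ring with $\Char(R/M)=p>0$, since $R[G]$ is by hypothesis a principal ideal ring, and since $R$ is \emph{not} a division ring, the prime-characteristic clause of Theorem~\ref{thm:DorseyFinite} applies: the implication (i)$\Rightarrow$(ii) forces $G$ to be an $R$-admissible group. By the definition of $R$-admissibility recalled in Remark~\ref{Rem:PrepForDorsey}(c), this in particular requires that $|G| \cdot 1_{R/J(R)} \in U(R/J(R))$. As $R$ is local, its Jacobson radical coincides with $M$ (see Remark~\ref{Rem:PrepForDorsey}(b)), so $R/J(R) = R/M$ is a division ring of characteristic $p$.

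It then remains to extract the contradiction from this invertibility condition, which is a one-line characteristic computation: because $\Char(R/M)=p$ and $|G| = p^k$ with $k \geq 1$, we have $|G| \cdot 1_{R/M} = p^k \cdot 1_{R/M} = 0$, which is certainly not invertible in $R/M$, contradicting the requirement $|G| \cdot 1_{R/M} \in U(R/M)$. Hence $G$ cannot be a $p$-group. I do not anticipate any genuine obstacle here: once Dorsey's criterion is invoked the argument reduces to observing that the factor $|G|$ vanishes modulo $p$. The only point demanding care is the convention that a $p$-group in this setting has order a \emph{positive} power of $p$ (otherwise the trivial group would be a counterexample, since $R[\{e\}] = R$ is a principal ideal ring while $R$ need not be a division ring), so that $|G| \cdot 1_{R/M}$ genuinely vanishes. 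As a consistency check, note that Lemma~\ref{lem:invertibility}(b) already records the analogous obstruction $|G| \cdot 1_R \notin U(R)$ at the level of $R$ itself, but since $R$-admissibility is phrased over $R/J(R)$ it is cleanest to carry out the computation modulo $M$.
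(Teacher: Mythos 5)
Your proof is correct, but it takes a genuinely different route from the paper: the paper offers no argument at all for this lemma, stating only that it ``follows immediately from'' Lemma~5 of Dorsey's paper, an external result never restated in the text. You instead derive it from Theorem~\ref{thm:DorseyFinite} (Dorsey's Theorem~4, which the paper \emph{does} state) combined with the definition of $R$-admissibility in Remark~\ref{Rem:PrepForDorsey}(c): the implication (i)$\Rightarrow$(ii), in the prime-characteristic clause and with $R$ not a division ring, forces $|G|\cdot 1_{R/M}\in U(R/M)$, which is impossible when $|G|$ is a positive power of $p=\Char(R/M)$. This makes the lemma self-contained relative to results already quoted in the paper, which is a real gain; the only caveat is that in Dorsey's original article Theorem~4 is obtained downstream of Lemma~5, so your derivation reverses his logical order --- harmless here, since the paper treats Theorem~\ref{thm:DorseyFinite} as a black box and no circularity arises internally (Lemma~\ref{cor:pgroup} feeds the implication (i)$\Rightarrow$(ii) of Theorem~\ref{thm:localcase}, whereas Theorem~\ref{thm:DorseyFinite} is invoked only in the converse direction). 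Your remark about the trivial group is also well taken: under the paper's stated convention that a $p$-group is any group whose order is a power of $p$, the trivial group would be a literal counterexample (as $R[\{e\}]=R$ is a principal ideal ring), so the lemma must indeed be read with ``$p$-group'' meaning $|G|=p^k$, $k\geq 1$ --- and this is exactly the reading the paper needs later, when it concludes in Theorem~\ref{thm:localcase} that the cyclic $p$-group $G/N$, being ``not a $p$-group'', is trivial.
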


In the following remark we record a number of facts that will be useful in the proof of the subsequent lemma.

\begin{Rem}\label{Rem:PrepForLemma}
(a) Recall from \cite[Proposition 27.1]{AndFul}
that if $I$ is a nil ideal of a ring $S$, then idempotents lift modulo $I$.

(b) If $S$ is a left artinian ring, then $J(S)$ is nilpotent (see e.g. \cite[Theorem 15.20]{AndFul}). In particular, $J(S)$ is nil
and hence by (a) idempotents in $S$ lift modulo $J(S)$.

(c) If $I$ is an ideal of a ring $S$ such that $S/I$ is semiprimitive, i.e. $J(S/I)=\{0\}$, then $J(S) \subseteq I$ (see e.g. \cite[Ex. 4.11]{LamBook}).

(d) If $S$ is a left artinian ring, then by (b), $J(S)$ is a nilpotent ideal and hence by \cite[Theorem 22.9]{LamBook} there is a bijective correspondence between centrally primitive idempotents of $S$ and centrally primitive idempotents of $S/(J(S)^2)$.
\end{Rem}

The proof of the following lemma is inspired by \cite[Corollary 9]{Dorsey}.

\begin{Lem}\label{lem:Gadmissible}
Let $(R,M)$ be a unital local artinian principal ideal ring, 
let $G$ be a finite group, and suppose that $R[G]$ is abelian.
If $|G| \cdot 1_R \in U(R)$, then $G$ is $R$-admissible.
\end{Lem}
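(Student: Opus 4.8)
The plan is to verify directly the two defining conditions of $R$-admissibility recorded in Remark~\ref{Rem:PrepForDorsey}(c), keeping in mind that $J(R)=M$ since $R$ is local. The first condition, $|G|\cdot 1_{R/M}\in U(R/M)$, is immediate: the natural quotient map $R\to R/M$ is a unital ring homomorphism and therefore carries the unit $|G|\cdot 1_R$ to the unit $|G|\cdot 1_{R/M}$. The substance of the lemma lies in the second condition, namely that every centrally primitive idempotent of $(R/M)[G]$ lifts to a centrally primitive idempotent of $(R/M^2)[G]$.

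To set this up I would first record the relevant structural facts. Since $R$ is artinian and $G$ is finite, $R[G]$ is artinian by Theorem~\ref{thm:Connell}; being also abelian by hypothesis, it decomposes, as is standard for an abelian artinian ring, as a finite direct product of local rings $R[G]=\prod_{i=1}^k L_i$, with associated complete set of orthogonal centrally primitive idempotents $\epsilon_1,\ldots,\epsilon_k$. Next I would identify $J(R[G])=M[G]$: the ideal $M[G]$ is nilpotent because $M=J(R)$ is nilpotent (Remark~\ref{Rem:PrepForLemma}(b)), so $M[G]\subseteq J(R[G])$, while $R[G]/M[G]\cong (R/M)[G]$ is semisimple by Maschke's theorem (here $R/M$ is a division ring and $|G|\cdot 1_{R/M}\in U(R/M)$), hence semiprimitive, forcing $J(R[G])\subseteq M[G]$ by Remark~\ref{Rem:PrepForLemma}(c). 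Consequently $(R/M)[G]=R[G]/J(R[G])=\prod_{i=1}^k L_i/J(L_i)$ is a direct product of division rings, and its centrally primitive idempotents are exactly the images $\overline{\epsilon_i}$.

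The final and most delicate step is to transport this decomposition through the quotient $\pi:R[G]\to (R/M^2)[G]$. The elements $\epsilon_i'=\pi(\epsilon_i)$ form a complete set of orthogonal central idempotents, and each is nonzero since a nonzero idempotent cannot lie in $\ker\pi=M^2[G]\subseteq M[G]=J(R[G])$. I would then argue that each corner $\epsilon_i'(R/M^2)[G]=\pi(L_i)$ is a nonzero quotient of the local ring $L_i$, hence itself local; this is the crux, as it is precisely what guarantees that the $\epsilon_i'$ are centrally primitive in $(R/M^2)[G]$ rather than merely central. Since the composite $R[G]\to (R/M^2)[G]\to (R/M)[G]$ sends $\epsilon_i\mapsto \epsilon_i'\mapsto \overline{\epsilon_i}$, every centrally primitive idempotent $\overline{\epsilon_i}$ of $(R/M)[G]$ lifts to the centrally primitive idempotent $\epsilon_i'$ of $(R/M^2)[G]$, which establishes the second condition and completes the proof.

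The main obstacle I anticipate is exactly the preservation of local corners under $\pi$: everything hinges on the facts that idempotents avoid the Jacobson radical (giving nonvanishing of the $\epsilon_i'$) and that a nonzero quotient of a local ring is again local (giving central primitivity). It is here that the hypothesis that $R[G]$ be abelian is indispensable, since without it the semisimple quotient $(R/M)[G]$ could acquire matrix factors whose centrally primitive idempotents would not correspond to local corners upstairs, and the clean bijection of centrally primitive idempotents through the tower would break down.
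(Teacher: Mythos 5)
Your proof is correct, and its overall skeleton agrees with the paper's up to the midpoint: both arguments establish $J(R[G])=M[G]$ by combining the inclusion $M[G]\subseteq J(R[G])$ (you get it from nilpotency of $M=J(R)$; the paper cites Connell's inclusion $J(R)\subseteq J(R[G])$) with Maschke's theorem applied to $(R/M)[G]$ and Remark~\ref{Rem:PrepForLemma}(c), and both then use the isomorphisms $R[G]/M[G]\cong (R/M)[G]$ and $R[G]/M^2[G]\cong (R/M^2)[G]$. Where you genuinely diverge is in how the centrally primitive idempotents are transported. The paper works \emph{upward and then downward via general lifting theorems}: it uses Remark~\ref{Rem:PrepForLemma}(b) (idempotents lift modulo a nil ideal) together with abelianness to lift every centrally primitive idempotent of $(R/M)[G]$ to a centrally primitive idempotent of $R[G]$, and then invokes Remark~\ref{Rem:PrepForLemma}(d) (Lam's correspondence modulo $J^2$) to pass to $(R/M^2)[G]$. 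You instead start from the decomposition of the abelian artinian ring $R[G]$ into a finite product of local rings (Habeb's result, the paper's Lemma~\ref{lem:AbelianDecomp}) and push its centrally primitive idempotents \emph{downward}, checking that they stay centrally primitive because nonzero idempotents avoid the Jacobson radical and nonzero quotients of local rings are local, and that their images in the product of division rings $(R/M)[G]$ exhaust \emph{all} centrally primitive idempotents there — a point you rightly do not skip, since $R$-admissibility demands that every such idempotent lifts. The trade-off: the paper's proof is shorter but leans on two external lifting/correspondence results, whereas yours is more self-contained and makes the bijection between the centrally primitive idempotents at all three levels ($R[G]$, $(R/M^2)[G]$, $(R/M)[G]$) completely explicit. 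Abelianness is used in both, but in different guises — in yours through Habeb's local decomposition, in the paper's to guarantee that lifted idempotents remain central.
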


\begin{proof}
Given an ideal $I$ of $R$ we will let $IG$ denote the set $I \cdot R[G]$.
Notice that $IG$ is an ideal of the group ring $R[G]$.
It is easy to see that we get two natural ring isomorphisms
\begin{equation}
\frac{R[G]}{IG}
\cong 
\left(\frac{R}{I}\right) [G]
\quad
\text{ and }
\quad
\frac{R[G]}{I^2G}
\cong
\left(\frac{R}{I^2}\right)[G].
\label{eq:LemFirstQuot}
\end{equation}
We will now direct our attention to the case $I=J(R)$.

Using that $R$ is artinian, \cite[Proposition 9(25)]{Connell} yields that $J(R) \subseteq J(R[G])$.
From this we get that $J(R)G \subseteq J(R[G])$.
Notice that $R/J(R)$ is a divison ring, since $R$ is local.
Moreover, $|G|\cdot 1_R \in U(R)$ yields $|G|\cdot 1_{R/J(R)} \in U(R/J(R))$,
and hence by Maschke's theorem we conclude that $(R/J(R))[G]$ is semisimple.
In particular, by \eqref{eq:LemFirstQuot}, $(R/J(R))[G]$ and $R[G]/(J(R)G)$ are semiprimitive.
By Remark~\ref{Rem:PrepForLemma}(c) we get that
$J(R[G]) \subseteq J(R)G$.
By combining this with the previous inclusion we get that
	$J(R[G]) = J(R)G.$ 
From this we also get that $J(R[G])^2 = J(R)^2 G$.
Thus, by invoking \eqref{eq:LemFirstQuot}, we get that
\begin{equation}
(R/(J(R))[G] \cong R[G]/(J(R)G) = R[G]/(J(R[G]))
\label{eq:LemSecondEq1}
\end{equation}
and
\begin{equation}
R/(J(R)^2)[G] \cong R[G]/(J(R)^2G) = R[G]/(J(R[G])^2).
\label{eq:LemSecondEq2}
\end{equation}
Using that $G$ is finite and that $R$ is artinian, by Theorem~\ref{thm:Connell}, $R[G]$ is artinian. 
Thus, by Remark~\ref{Rem:PrepForLemma}(b), \eqref{eq:LemSecondEq1} and the fact that $R[G]$ is abelian, every centrally primitive idempotent in
$(R/J(R))[G]$
can be lifted to a centrally primitive idempotent in $R[G]$.

Furthermore, by Remark~\ref{Rem:PrepForLemma}(d) and \eqref{eq:LemSecondEq2}
there is a bijective correspondence between
the centrally primitive idempotents of $R[G]$ and the centrally primitive idempotents of
$(R/J(R)^2)[G]$.
%
%
In conclusion,
every centrally primitive idempotent in $(R/J(R))[G]$
can be lifted to a centrally primitive idempotent in $(R/J(R)^2)[G]$.
This shows that $G$ is $R$-admissible.
\end{proof}

\begin{Rem}
By the third isomorphism theorem for rings, we get that
$R[G]/J(R[G]) \cong (R[G]/J(R[G])^2) / (J(R[G])/J(R[G])^2)$.
If we combine this with \eqref{eq:LemSecondEq1} and \eqref{eq:LemSecondEq2}, then we see that
$(R/J(R))[G]$ may be viewed as a quotient of $(R/J(R)^2)[G]$.
Hence, in this situation it makes sense to
speak of \emph{lifting idempotents from
$(R/J(R))[G]$ to $(R/J(R)^2)[G]$}.
\end{Rem}

\begin{The}\label{thm:localcase}
  Let $(R, M)$ be a unital local ring, let $G$ be a group,
  and suppose that $R[G]$ is abelian. 
  The following two assertions are equivalent:
  \begin{enumerate}[{\rm (i)}]
      \item The group ring $R[G]$ is a K\"{o}the ring;
      
      \item $\Char(R/ M) = 0$: The ring $R$ is a K\"{o}the ring and $G$ is a finite group. If $R$ is not a division ring, then $|G| \cdot 1_R \in U(R)$.
 \\ $\Char(R/ M) = p > 0$: The ring $R$ is a K\"{o}the ring and $G$ is a finite $p^{\prime}$-by-cyclic $p$ group. If $R$ is not a division ring, then $|G| \cdot 1_R \in U(R)$.
  \end{enumerate}
 \end{The}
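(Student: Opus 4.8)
The plan is to collapse the K\"othe property to the principal-ideal property and then invoke Dorsey's characterization. Since $R[G]$ is abelian, Theorem~\ref{thm:Behboodi} tells us that $R[G]$ is a K\"othe ring if and only if it is an artinian principal ideal ring, so throughout I will work with the latter formulation. The decisive external input will be Dorsey's Theorem~\ref{thm:DorseyFinite}, applied to the local artinian principal ideal ring $R$; the only real work is to reconcile Dorsey's notion of $R$-admissibility with the transparent invertibility condition $|G|\cdot 1_R\in U(R)$ appearing in~(ii). Before starting I would record two preliminary facts used on both sides: first, $R$ is local and hence has no nontrivial idempotents, so $R$ is abelian (this also follows from Lemma~\ref{lemma:AbelInj}(a) via the embedding $R\hookrightarrow R[G]$); second, since $M=J(R)$ and $R/M$ is a division ring, the element $|G|\cdot 1_R$ lies in $U(R)$ if and only if $|G|\cdot 1_{R/J(R)}$ is a unit (equivalently, nonzero) in $R/J(R)$. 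This last equivalence is the two-way bridge between the two formulations of invertibility.

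For (i)$\Rightarrow$(ii) I would argue as follows. If $R[G]$ is K\"othe, then Proposition~\ref{prop:KoetheNec} shows $R$ is K\"othe and $G$ is finite; as $R$ is abelian, Theorem~\ref{thm:Behboodi} makes $R$ a local artinian principal ideal ring, and $R[G]$ itself an artinian principal ideal ring. Now Theorem~\ref{thm:DorseyFinite} applies: in characteristic $p>0$ it forces $G$ to be $p'$-by-cyclic $p$, and in either characteristic, whenever $R$ is not a division ring, it forces $G$ to be $R$-admissible. Unwinding $R$-admissibility yields $|G|\cdot 1_{R/J(R)}\in U(R/J(R))$, and the local-ring bridge converts this to $|G|\cdot 1_R\in U(R)$, giving exactly~(ii). (In characteristic $0$ this invertibility is automatic anyway by Lemma~\ref{lem:invertibility}(a).)

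For (ii)$\Rightarrow$(i) I would assume~(ii) and show that $R[G]$ is an artinian principal ideal ring, which is K\"othe by Theorem~\ref{thm:Kothe}. Again $R$ is a local artinian principal ideal ring (being local, abelian and K\"othe), and since $G$ is finite, Connell's Theorem~\ref{thm:Connell} gives that $R[G]$ is artinian. To obtain the principal-ideal property from Theorem~\ref{thm:DorseyFinite} I must check its hypotheses: the group-theoretic requirement ($p'$-by-cyclic $p$ in characteristic $p$) is handed to me directly by~(ii), while the remaining $R$-admissibility requirement---needed only when $R$ is not a division ring---is produced from the hypothesis $|G|\cdot 1_R\in U(R)$ by Lemma~\ref{lem:Gadmissible}, whose applicability relies on $R[G]$ being abelian. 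With Dorsey's conditions verified, $R[G]$ is a principal ideal ring, hence an artinian principal ideal ring, hence K\"othe.

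The main obstacle, and the place where care is required, is the $R$-admissibility bookkeeping: one must track correctly, in each characteristic and in the division-ring versus non-division-ring cases, the passage between Dorsey's two-part admissibility condition and the single condition $|G|\cdot 1_R\in U(R)$. The genuinely substantial half of that passage---that $|G|\cdot 1_R\in U(R)$ already forces admissibility---has been isolated into Lemma~\ref{lem:Gadmissible}, so once that lemma is granted the remainder of the argument is a routine assembly of Theorems~\ref{thm:Behboodi},~\ref{thm:Connell},~\ref{thm:DorseyFinite} and~\ref{thm:Kothe} around the local-ring bridge.
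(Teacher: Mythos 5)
Your proposal is correct, and it agrees with the paper's proof on the direction (ii)$\Rightarrow$(i): there, exactly as in the paper, everything reduces to Lemma~\ref{lem:Gadmissible} feeding Theorem~\ref{thm:DorseyFinite}, followed by Theorem~\ref{thm:Kothe} (the paper merely splits off the division-ring case and handles it via Theorem~\ref{thm:Passman} instead of subsuming it under Dorsey, which is a cosmetic difference). Where you genuinely diverge is in (i)$\Rightarrow$(ii). The paper does \emph{not} use the forward direction of Theorem~\ref{thm:DorseyFinite} at all: it obtains the $p^{\prime}$-by-cyclic $p$ condition by passing to $(R/M)[G]$ via Lemma~\ref{lem:HomomorphicImage}(b) and invoking Theorem~\ref{thm:Passman}, and it obtains $|G|\cdot 1_R\in U(R)$ by a contradiction argument: assuming non-invertibility, it uses Lemma~\ref{cor:pgroup} applied to $R[G/N]$ (where $N$ is the normal $p^{\prime}$-subgroup) to force $G/N$ to be trivial, hence $G$ a $p^{\prime}$-group, contradicting Lemma~\ref{lem:invertibility}(b). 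You instead apply Behboodi's theorem to make $R[G]$ an artinian principal ideal ring, quote the equivalence in Theorem~\ref{thm:DorseyFinite} to conclude $G$ is $p^{\prime}$-by-cyclic $p$ and (when $R$ is not a division ring) $R$-admissible, and then extract $|G|\cdot 1_{R/J(R)}\in U(R/J(R))$ from the very definition of admissibility in Remark~\ref{Rem:PrepForDorsey}(c), converting it to $|G|\cdot 1_R\in U(R)$ by the standard local-ring fact that units are exactly the elements outside $M$. Both routes are sound; yours is shorter and more uniform but leans on the full strength of Dorsey's equivalence (both implications), whereas the paper's argument uses only the ``conditions imply principal ideal ring'' half of Dorsey together with the lighter Lemma~\ref{cor:pgroup}, keeping the forward implication closer to first principles (Passman's theorem plus the elementary invertibility Lemma~\ref{lem:invertibility}). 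Your two preliminary observations (local rings have no nontrivial idempotents, hence are abelian; and the unit-modulo-$M$ bridge) are both correct and adequately justified.
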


\begin{proof}
(i)$\Rightarrow$(ii)
By Proposition~\ref{prop:KoetheNec}, $R$ is a K\"{o}the ring and $G$ is a finite group.
If $G$ is the trivial group, then clearly $|G|\cdot 1_R = 1_R \in U(R)$.
Therefore, suppose that $G$ is non-trivial.
Notice that $R/M$ is a division ring whose characteristic is either zero or a prime.

\textbf{Case 1}: $\Char(R/M) = 0$.
Consider $n=|G|>1$. By Lemma~\ref{lem:invertibility}(a) we conclude that $|G|\cdot 1_R \in U(R)$.\\ 
\textbf{Case 2}:
$\Char(R/M)= p$.
By Theorem~\ref{thm:Behboodi}, $R[G]$ is a principal ideal ring,
and by Lemma~\ref{lem:HomomorphicImage}(b) the same conclusion holds for $(R/M)[G]$.
We know that $R/M$ is a division ring, and hence Theorem~\ref{thm:Passman}
yields that $G$ is $p'$-by-cyclic $p$.
Suppose that $R$ is not a division ring. Consider $n=|G|>1$.
Seeking a contradiction, suppose that
$|G|\cdot 1_R \notin  U(R)$. 
Using that $G$ is $p'$-by-cyclic $p$,
there is a normal subgroup $N$ of $G$ such that $N$ is a $p'$-group
and $G/N$ is a cyclic $p$-group.
Notice that, since $R[G]$ is a principal ideal ring,
Lemma~\ref{lem:HomomorphicImage}(b) yields that $R[G/N]$ is also a principal ideal ring. 
By Lemma~\ref{cor:pgroup} we conclude that $G/N$ is not a $p$-group.
Thus, $G/N$ is trivial, i.e. $N=G$, and hence $G$ is a $p'$-group.
By Lemma~\ref{lem:invertibility}(b) we conclude that $p$ divides $|G|$
but this is a contradiction since $G$ is a $p'$-group.

(ii)$\Rightarrow$(i)
We consider two cases.
\\ \textbf{Case 1}: $ R $ is a division ring.
Notice that $R[G]$ is an artinian ring. Indeed, by assumption $G$ is finite and $R$ 
is artinian. 
Thus, artinianity of $R[G]$ follows from Theorem~\ref{thm:Connell}.
By Theorem~\ref{thm:Passman}, 
$R[G]$ is a principal ideal ring. Hence,
by Theorem \ref{thm:Kothe}, 
the group ring $R[G]$ is a K\"{o}the ring.
\\ \textbf{Case 2}: $ R $ is not a division ring.
By assumption, $|G| \cdot 1_R \in U(R)$.
It follows, by Lemma~\ref{lem:Gadmissible} that $G$ is $R$-admissible.
By Theorem~\ref{thm:DorseyFinite},
$R[G]$ is a principal ideal ring and the desired conclusion now follows in the same way as for Case 1.
\end{proof}

\section{Proof of the first main result}\label{Sec:MainResult}

In this section we prove our first main result 
by combining 
a couple of lemmas 
with the results of the preceding sections.

The proof of the following lemma follows by
Theorem~\ref{thm:Behboodi},
Lemma~\ref{lem:HomomorphicImage}(a) 
and
Lemma~\ref{lemma:AbelInj}.

\begin{Lem}\label{lemma:FinProdAbelKothe}
Let $S = S_1 \times \ldots \times S_n$ be a unital ring.
Then $S$ is an abelian K\"{o}the ring if and only if $S_i$ is an abelian K\"{o}the ring
for every $i \in \{1,\ldots,n\}$.
\end{Lem}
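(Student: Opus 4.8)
The plan is to prove the biconditional by reducing everything to the finite-product decomposition of rings and then applying the results already assembled in the excerpt. The statement to be shown is that $S = S_1 \times \ldots \times S_n$ is an abelian K\"{o}the ring if and only if each factor $S_i$ is an abelian K\"{o}the ring. Since we have Lemma~\ref{lemma:AbelInj}(b) handling the abelian part directly (a finite product is abelian exactly when each factor is), the two remaining ingredients are to transfer the K\"{o}the property in both directions across a finite product.

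First I would dispose of the ''only if'' direction. Suppose $S$ is an abelian K\"{o}the ring. For each $i$, the natural projection $\pi_i : S = S_1 \times \ldots \times S_n \to S_i$ is a surjective ring homomorphism. By Lemma~\ref{lem:HomomorphicImage}(a), the homomorphic image $S_i$ of a K\"{o}the ring is again a K\"{o}the ring (on both sides). Combined with Lemma~\ref{lemma:AbelInj}(b), which gives that each $S_i$ is abelian, this shows each $S_i$ is an abelian K\"{o}the ring.

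For the ''if'' direction, suppose each $S_i$ is an abelian K\"{o}the ring. By Lemma~\ref{lemma:AbelInj}(b), $S$ is abelian. It remains to show $S$ is a K\"{o}the ring, and this is where I would lean on Theorem~\ref{thm:Behboodi}: an abelian ring is a K\"{o}the ring precisely when it is an artinian principal ideal ring. Applying Theorem~\ref{thm:Behboodi} to each factor, every $S_i$ is a (two-sided) artinian principal ideal ring. A finite product of left (resp. right) artinian rings is left (resp. right) artinian, and a finite product of principal left (resp. right) ideal rings is a principal left (resp. right) ideal ring, since every left ideal of $S = \prod_i S_i$ has the form $\prod_i I_i$ with $I_i$ a left ideal of $S_i$, and if each $I_i = S_i x_i$ then the whole ideal is generated by $(x_1,\ldots,x_n)$. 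Hence $S$ is an abelian artinian principal ideal ring, so by Theorem~\ref{thm:Behboodi} again (now in the reverse direction) $S$ is a K\"{o}the ring.

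I do not anticipate a serious obstacle here, as the lemma is essentially an assembly of Theorem~\ref{thm:Behboodi}, Lemma~\ref{lem:HomomorphicImage}(a), and Lemma~\ref{lemma:AbelInj}; indeed the excerpt explicitly announces this as the proof strategy. The one point requiring slight care is the ''if'' direction: passing from the factors back to the product cannot be done by the homomorphic-image lemma alone (a product is not a quotient of its factors), which is exactly why the detour through Theorem~\ref{thm:Behboodi} and the structural fact that products of artinian principal ideal rings remain artinian principal ideal rings is necessary. Verifying that ideals of a finite product split as products of ideals of the factors is routine and relies only on the central idempotents $e_i = (0,\ldots,1_{S_i},\ldots,0)$.
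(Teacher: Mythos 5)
Your proposal is correct and follows essentially the same route as the paper, which simply asserts that the lemma ``follows by Theorem~\ref{thm:Behboodi}, Lemma~\ref{lem:HomomorphicImage}(a) and Lemma~\ref{lemma:AbelInj}'' --- precisely the three ingredients you assemble, with the product-of-artinian-principal-ideal-rings step (via the central idempotents $e_i$) being the detail the paper leaves implicit.
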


We recall the following result from \cite[Proposition 3]{Habeb}.

\begin{Lem}\label{lem:AbelianDecomp}
Let $R$ be a left (or right) artinian ring.
Then $R$ is a finite product of local rings if and only if every idempotent of $R$ is central in $R$.
\end{Lem}

We are now ready to prove our first main result.

\begin{proof}[Proof of Theorem~\ref{thm:MainThmA}]
(i)$\Rightarrow$(ii)
By Proposition~\ref{prop:KoetheNec}, $R$ is a K\"{o}the ring (and thus artinian) and $G$ is finite.
The existence of $n$ and local rings $(R_1,M_1),\ldots,(R_n,M_n)$ such that $R=R_1 \times \ldots \times R_n$
follows from Lemma~\ref{lem:AbelianDecomp}.
Take an arbitrary $i \in \{1,\ldots,n\}$.
Notice that $R_i[G]$ is a K\"{o}the ring by Lemma~\ref{lem:HomomorphicImage}(a),
and abelian by Lemma~\ref{lemma:AbelInj}(a).
By Theorem~\ref{thm:localcase}, $G$ is $p'$-by-cyclic $p$ if $\Char(R_i/M_i)>0$.
If $R_i$ is not semiprimitive, then $M_i=J(R_i) \neq \{0\}$.
In that case, $R_i$ is not a division ring and it follows from Theorem~\ref{thm:localcase} that $|G|\cdot 1_{R_i} \in U(R_i)$.
\\
(ii)$\Rightarrow$(i)
By Lemma~\ref{lem:HomomorphicImage}(a) 
$R_i$ is a local K\"{o}the ring for every $i\in \{1,\ldots,n\}$.
Take $i \in \{1,\ldots,n\}$.
By Lemma~\ref{lemma:AbelInj}(a), the group ring $R_i[G]$ is abelian,
and by Theorem~\ref{thm:localcase} it is a K\"{o}the ring.
Using Lemma~\ref{lemma:FinProdAbelKothe}, we conclude that
$R[G]=\prod_{i=1}^n R_i[G]$ is a K\"{o}the ring.
\end{proof}

\section{Group rings over division rings}\label{Sec:division}

In this section we characterize K\"{o}the group rings over division rings, both in characteristic zero (see Theorem~\ref{thm:divNC}) and in prime characteristic (see Theorem~\ref{thm:divPrime}).

\begin{The}\label{thm:divNC}
Let $R$ be a division ring with $\Char(R)=0$ and let $G$ be a group.
The following three assertions are equivalent:
\begin{enumerate}[{\rm (i)}]
    \item The group ring $R[G]$ is a left K\"{o}the ring;
    \item The group ring $R[G]$ is a right K\"{o}the ring;
    \item $G$ is a finite group.
\end{enumerate}
\end{The}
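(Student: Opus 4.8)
The plan is to prove the cyclic chain of implications
(iii)$\Rightarrow$(i)$\Rightarrow$(ii)$\Rightarrow$(iii), or even more economically to
establish (iii)$\Rightarrow$(i), (iii)$\Rightarrow$(ii), and then the common
converse (i)$\Rightarrow$(iii) together with (ii)$\Rightarrow$(iii). Since a division
ring is in particular abelian (every idempotent is $0$ or $1$, hence central) and
local with $M=\{0\}$, the group ring $R[G]$ is automatically abelian, so the
heavy lifting is already available from Theorem~\ref{thm:localcase} and
Theorem~\ref{thm:Passman}. The idea is that in characteristic zero the
characteristic obstruction disappears, so the only remaining constraint on $G$ is
finiteness.

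First I would prove the converse directions (i)$\Rightarrow$(iii) and
(ii)$\Rightarrow$(iii). If $R[G]$ is a left (resp. right) K\"{o}the ring, then by
Proposition~\ref{prop:KoetheNec}(b) it is left (resp. right) artinian, and since
$R$ is artinian, Theorem~\ref{thm:Connell} (Connell) forces $G$ to be finite.
This handles both converses at once and is entirely routine.

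Next I would establish the forward direction. Assume $G$ is finite. Since $R$ is a
division ring it is a local artinian principal ideal ring with $M=\{0\}$ and
$\Char(R/M)=\Char(R)=0$. I want to feed this into Theorem~\ref{thm:localcase}: as
$R$ is a division ring (hence trivially a K\"{o}the ring and the clause
``if $R$ is not a division ring'' is vacuous), condition (ii) of
Theorem~\ref{thm:localcase} in the case $\Char(R/M)=0$ is satisfied precisely when
$G$ is finite. Therefore $R[G]$ is a K\"{o}the ring, which is by definition both a
left and a right K\"{o}the ring, yielding both (i) and (ii) simultaneously.

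The main subtlety to watch is that Theorem~\ref{thm:localcase} is stated for a
\emph{K\"{o}the ring} (two-sided), so it delivers (i) and (ii) together rather than
separately; one should confirm that this is consistent with the one-sided phrasing
of (i) and (ii) in the present statement. In characteristic zero this causes no
trouble, because the underlying principal ideal ring criterion of
Theorem~\ref{thm:Passman} is left–right symmetric (indeed (i)$\Leftrightarrow$(ii)
there), so left and right K\"{o}the are equivalent here and the cyclic implication
chain closes cleanly. Thus the only genuinely delicate point is bookkeeping about
one-sided versus two-sided hypotheses, and once that is settled the proof is short.
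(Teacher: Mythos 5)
There is a genuine gap in your forward direction. You claim that because $R$ is a division ring (hence abelian as a ring), ``the group ring $R[G]$ is automatically abelian,'' and you then apply Theorem~\ref{thm:localcase}, whose hypotheses explicitly require $R[G]$ to be abelian. But abelianness does not pass from $R$ to $R[G]$: Lemma~\ref{lemma:AbelInj}(a) only gives the reverse implication (a subring of an abelian ring is abelian). The paper's own Example~\ref{ex:K2S3} exhibits a field $K$ with $K[S_3]$ non-abelian, and the same phenomenon occurs in characteristic zero: $\mathbb{Q}[S_3] \cong \mathbb{Q} \times \mathbb{Q} \times M_2(\mathbb{Q})$ has non-central idempotents. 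So for a general finite group $G$ (which is exactly the hypothesis in (iii)), Theorem~\ref{thm:localcase} simply does not apply, and your argument proves (iii)$\Rightarrow$(i) only for those $G$ with $R[G]$ abelian --- a severe restriction that excludes most non-abelian groups.

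The repair is to bypass Theorem~\ref{thm:localcase} entirely, which is what the paper does: assuming $G$ finite, $R[G]$ is artinian by Theorem~\ref{thm:Connell}; since $\Char(R)=0$, Theorem~\ref{thm:Passman} (which has no abelianness hypothesis and is stated for arbitrary finite groups over division rings) gives that $R[G]$ is a principal ideal ring; then Theorem~\ref{thm:Kothe} concludes that $R[G]$ is a K\"{o}the ring, hence both left and right K\"{o}the. Your converse directions (i)$\Rightarrow$(iii) and (ii)$\Rightarrow$(iii) via Proposition~\ref{prop:KoetheNec} and Theorem~\ref{thm:Connell} are correct and match the paper, and your observation about the left/right symmetry coming from Theorem~\ref{thm:Passman} is sound; only the route through Theorem~\ref{thm:localcase} must be replaced.
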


\begin{proof}
(i)$\Rightarrow$(iii)
This follows immediately from Proposition~\ref{prop:KoetheNec}(b).

(iii)$\Rightarrow$(i)
The ring $R$ is a division ring and hence artinian.
Using that $G$ is finite, Theorem~\ref{thm:Connell} yields that $R[G]$ is artinian.
By Theorem~\ref{thm:Passman}, $R[G]$ is a principal ideal ring.
The desired conclusion now follows from Theorem~\ref{thm:Kothe}.

(ii)$\Leftrightarrow$(iii)
The proof is analogous to the proof of (i)$\Leftrightarrow$(iii).
\end{proof}

We prepare ourselves for the case of prime characteristic by recalling 
the following result from \cite[Theorem(2), p.138]{Nicholson}.

\begin{The}[Nicholson]\label{thm:nicholson}
Let $R$ be a unital local ring and let $G$ be a locally finite $p$-group.
If $p \in J(R)$, then the group ring $R[G]$ is local.
\end{The}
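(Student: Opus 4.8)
The plan is to prove that the group ring $R[G]$ is local, where $(R,M)$ is a unital local ring, $G$ is a locally finite $p$-group, and $p \in J(R) = M$. Recall that a ring $S$ is local precisely when the non-units of $S$ form a (two-sided) ideal, equivalently when $S/J(S)$ is a division ring. I would aim to show that $R[G]/J(R[G])$ is a division ring; the most efficient route is to produce an explicit ideal whose quotient is the division ring $R/M$ and then argue that this ideal must in fact be $J(R[G])$.

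First I would introduce the augmentation-type map. Composing the coefficient-wise reduction $R \to R/M$ with the collapse of the group, one obtains a surjective ring homomorphism $\varepsilon : R[G] \to R/M$ sending $\sum_g r_g g \mapsto \sum_g \overline{r_g}$, whose kernel $I$ is the augmentation ideal relative to $M$. Since $R/M$ is a division ring, $I$ is a maximal (two-sided) ideal of $R[G]$, and $R[G]/I \cong R/M$. The whole argument then reduces to showing $I \subseteq J(R[G])$, for then $J(R[G]) = I$ by maximality and the quotient is the division ring $R/M$, forcing $R[G]$ to be local.

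To show $I \subseteq J(R[G])$, I would use the characterization that an element $x$ lies in $J(R[G])$ iff $1 - yx$ is left invertible for every $y$; more conveniently, it suffices to show every element of $I$ is quasi-regular, or that $I$ is a nil/topologically nilpotent ideal. The ideal $I$ is generated by the elements $g - 1$ for $g \in G$ together with $M \cdot R[G]$. The key structural fact I would exploit is local finiteness: any finite subset of $R[G]$ involves only finitely many group elements, which generate a finite $p$-subgroup $H \le G$. Restricting to the finite $p$-group $H$, the augmentation ideal of $(R/M)[H]$ (an ordinary group algebra of a $p$-group over a field of characteristic $p$, using $p \in M$ so that $\Char(R/M) = p$) is nilpotent — this is the classical fact that the augmentation ideal of a finite $p$-group algebra in characteristic $p$ is nilpotent. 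Combined with $M \subseteq J(R)$ being nil on each finitely generated piece, one concludes that every element of $I$ generates a nilpotent (hence quasi-regular) contribution locally.

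The main obstacle, and the step requiring the most care, is passing correctly from the finite-subgroup statement to the full group ring: local finiteness guarantees nilpotence only on finitely generated subalgebras, so I must verify quasi-regularity of an arbitrary element $x \in I$ rather than nilpotence of $I$ as a whole. Concretely, given $x \in I$, I would let $H$ be the finite $p$-group generated by the support of $x$, work inside $R[H]$ where both $M$ and the relative augmentation ideal behave nilpotently, deduce that $x$ is quasi-regular in $R[H]$, and then note that quasi-regularity persists in the larger ring $R[G]$. Showing that $I$ consists of quasi-regular elements that form an ideal — equivalently, carefully checking that this local nilpotence yields $I \subseteq J(R[G])$ — is precisely where the argument must be assembled with attention to the two-sidedness of $J$ and to the interplay between the radical of $R$ and the augmentation ideal of the $p$-group.
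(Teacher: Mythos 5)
A preliminary remark: the paper itself contains no proof of this statement --- it is quoted directly from Nicholson's paper \cite{Nicholson} --- so your proposal can only be judged on its own merits, against the known argument. Your skeleton is in fact the right one (and is essentially Nicholson's): pass to the kernel $I$ of $\varepsilon \colon R[G] \to R/M$, show $I \subseteq J(R[G])$ by proving every element of $I$ is quasi-regular, reduce to the finite subgroup $H$ generated by the support of a given element (local finiteness), invoke nilpotence of the augmentation ideal of $(R/M)[H]$, and observe that invertibility of $1-x$ in $R[H]$ persists in $R[G]$, so that $J(R[G]) = I$ by maximality and $R[G]/J(R[G]) \cong R/M$ is a division ring. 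One small caveat there: $R/M$ is a division ring, not necessarily a field, but the classical nilpotence result for the augmentation ideal of a finite $p$-group in characteristic $p$ goes through verbatim over division rings, by induction on $|H|$ using a central element $z$ of order $p$ and $(z-1)^p = z^p - 1 = 0$.

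The genuine gap is your treatment of the coefficient part of $I$. Twice you assert that $M = J(R)$ is ``nil on each finitely generated piece'' or ``behaves nilpotently'' inside $R[H]$, and your quasi-regularity claim rests entirely on this. It is false: the Jacobson radical of a local ring need not be nil. Take $R = \Z_{(p)}$, or $R = k[[t]]$ with $\Char(k) = p$; both satisfy the hypotheses (in particular $p \in J(R)$), yet $p \cdot 1$, respectively $t \cdot 1$, is an element of $I$ that is not nilpotent in any $R[H]$. So no amount of restricting to finite subgroups makes elements of $M \cdot R[H]$ nilpotent, and ``nilpotent hence quasi-regular'' collapses exactly at the step you yourself flagged as the crux. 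What is actually needed is the separate ring-theoretic input $M \cdot R[H] \subseteq J(R[H])$ for finite $H$, and this is proved not by nilpotence but by Nakayama's lemma: a simple left $R[H]$-module $V$ is finitely generated as an $R$-module (since $H$ is finite), $MV$ is an $R[H]$-submodule of $V$ (group elements commute with coefficients), and $MV = V$ would force $V = 0$; hence $MV = 0$ for every simple $V$, giving $M \cdot R[H] \subseteq J(R[H])$. (Alternatively, represent left multiplication on the free $R$-module $R[H]$ by matrices and use $J(M_n(R)) = M_n(J(R))$.) With that ingredient your argument does close: the augmentation-ideal fact gives $I_H^N \subseteq M \cdot R[H] \subseteq J(R[H])$, so the image of $I_H$ in the semiprimitive ring $R[H]/J(R[H])$ is a nilpotent ideal, hence zero, i.e. $I_H \subseteq J(R[H])$; quasi-regularity then persists in $R[G]$, so $I \subseteq J(R[G])$, and the rest of your outline is correct.
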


\begin{Cor}\label{cor:CharPlocal}
Let $R$ be a division ring with $\Char(R)=p>0$ and let $H$ be a $p$-group.
If $R[H]$ is a left (or right) K\"{o}the ring, then $H$ is finite $p'$-by-cyclic $p$.
\end{Cor}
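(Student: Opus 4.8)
The plan is to prove Corollary~\ref{cor:CharPlocal} by a short contrapositive-style argument that leverages Nicholson's theorem (Theorem~\ref{thm:nicholson}) to reduce to the local situation already handled by Theorem~\ref{thm:localcase}. The key observation is that when $R$ is a division ring of characteristic $p>0$ and $H$ is a $p$-group, we have $p \cdot 1_R = 0 \in J(R) = \{0\}$, so Nicholson's theorem applies directly and tells us that $R[H]$ is a \emph{local} ring. This is the crucial structural fact that unlocks everything else.

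First I would verify the hypotheses of Theorem~\ref{thm:nicholson}. Since $R$ is a division ring it is in particular a unital local ring with $J(R) = \{0\}$, and since $\Char(R)=p$ we trivially have $p = p\cdot 1_R = 0 \in J(R)$. A finite (or even locally finite) $p$-group $H$ is locally finite, so Nicholson's theorem yields that $R[H]$ is local. However, I should be careful here: Corollary~\ref{cor:CharPlocal} does not assume $H$ is finite a priori (that is part of the conclusion). So I would first note that if $R[H]$ is a K\"{o}the ring, then by Proposition~\ref{prop:KoetheNec}(b) the group $H$ is necessarily finite, and in particular locally finite, which legitimizes the application of Nicholson's theorem.

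Next, with $R[H]$ now known to be local, I would observe that a local ring is automatically abelian (its only idempotents are $0$ and $1$, since any nontrivial idempotent would generate a proper direct summand contradicting the uniqueness of the maximal ideal). Hence $R[H]$ is an abelian group ring over the local ring $R$, and Theorem~\ref{thm:localcase} applies. Since $R$ is a division ring with $\Char(R/M) = \Char(R) = p > 0$, the characterization in Theorem~\ref{thm:localcase}(ii) forces $H$ to be a finite $p^{\prime}$-by-cyclic $p$ group, which is exactly the desired conclusion. The right-handed case follows by the symmetric statement of Theorem~\ref{thm:localcase}, or simply because that theorem characterizes two-sided K\"{o}the rings and the one-sided hypotheses can be handled analogously.

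The main obstacle I anticipate is the subtle interplay between the one-sided K\"{o}the hypothesis and the finiteness of $H$: Theorem~\ref{thm:localcase} is stated for (two-sided) K\"{o}the rings and a group $G$ that is not assumed finite in advance, so I must make sure the logical ordering is correct, namely first extract finiteness of $H$ from Proposition~\ref{prop:KoetheNec}(b), then invoke Nicholson to get locality, and only then feed everything into Theorem~\ref{thm:localcase}. A second point requiring care is confirming that ``$R$ is a division ring'' indeed matches the ``$\Char(R/M)=p>0$'' branch of Theorem~\ref{thm:localcase} with $R$ itself being a division ring, so that the clause ``If $R$ is not a division ring, then $|G|\cdot 1_R \in U(R)$'' is vacuous and imposes no extra constraint. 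Once these bookkeeping matters are settled, the proof is essentially a two-line chain of implications.
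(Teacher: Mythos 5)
Your proposal is correct and follows essentially the same route as the paper's own proof: extract finiteness of $H$ from Proposition~\ref{prop:KoetheNec}, apply Theorem~\ref{thm:nicholson} to get that $R[H]$ is local, deduce that $R[H]$ is abelian, and conclude via Theorem~\ref{thm:localcase}. The only cosmetic difference is that you justify ``local implies abelian'' directly from the triviality of idempotents in a local ring, whereas the paper cites Lemma~\ref{lem:AbelianDecomp} (using artinianity of $R[H]$); both justifications are valid.
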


\begin{proof}
Suppose that $R[H]$ is a left K\"{o}the ring.
By Proposition~\ref{prop:KoetheNec}, $H$ is a finite $p$-group and $R[H]$ is a left artinian ring.
Using Theorem~\ref{thm:nicholson}, we get that $R[H]$ is a local ring
and hence, by Lemma~\ref{lem:AbelianDecomp}, $R[H]$ is an abelian ring.
It follows from Theorem~\ref{thm:localcase} that $H$ is a $p'$-by-cyclic $p$-group.
The right-handed case is treated analogously to the left-handed case.
\end{proof}

Recall that a finite group $G$ is said to be \emph{lagrangian} (see e.g. \cite{HumphreysJohnson,McLain}) if for every factor of $|G|$,
$G$ possesses a subgroup of that order.
Also recall that $G$ is said to be a \emph{Dedekind group} if every subgroup of $G$ is normal in $G$.

\begin{The}\label{thm:divPrime}
Let $R$ be a division ring with $\Char(R)=p>0$ and let $G$ be a (finite) lagrangian Dedekind group.
The following three assertions are equivalent:
\begin{enumerate}[{\rm (i)}]
	\item The group ring $R[G]$ is a left K\"{o}the ring;
	\item The group ring $R[G]$ is a right K\"{o}the ring;
	\item $G$ is $p'$-by-cyclic $p$, or $R[G]$ is semisimple.
\end{enumerate}
\end{The}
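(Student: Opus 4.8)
The plan is to prove the cycle of implications by grouping (iii)$\Rightarrow$(i) with (iii)$\Rightarrow$(ii) as the easy direction, and (i)$\Rightarrow$(iii) with (ii)$\Rightarrow$(iii) as the substantive one. Throughout I will use that, since $R$ is a division ring, it is artinian, and that $\{0\}$ is a proper ideal of $R$, so that Proposition~\ref{prop:KoetheNec} applies with $I=\{0\}$ to every normal subgroup $N$ of $G$. All the tools I invoke have matching left- and right-handed formulations, so the two one-sided statements (i) and (ii) will be handled by verbatim-symmetric arguments.

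For (iii)$\Rightarrow$(i) and (iii)$\Rightarrow$(ii): if $G$ is $p'$-by-cyclic $p$, then Theorem~\ref{thm:Passman} gives that $R[G]$ is both a left and a right principal ideal ring; since $G$ is finite and $R$ is artinian, Theorem~\ref{thm:Connell} shows that $R[G]$ is artinian, so $R[G]$ is an artinian principal ideal ring and Theorem~\ref{thm:Kothe} yields that it is a (two-sided) K\"othe ring. If instead $R[G]$ is semisimple, then every module is a direct sum of simple, and hence cyclic, modules, so $R[G]$ is again both a left and a right K\"othe ring. Either way (i) and (ii) follow.

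For the substantive direction I will show that (i) (resp. (ii)) forces $G$ to be $p'$-by-cyclic $p$, which already establishes (iii). First I would extract the Sylow structure: writing $|G|=p^a m$ with $\gcd(p,m)=1$, the lagrangian hypothesis supplies a subgroup $H\le G$ of order $m$, while Sylow's theorem supplies a subgroup $P\le G$ of order $p^a$; the Dedekind hypothesis makes both $H$ and $P$ normal. Since $P\cap H=\{1\}$ and $|PH|=|G|$, I obtain the internal direct product $G=P\times H$, with $P$ a $p$-group and $H$ a $p'$-group. Consequently $G/H\cong P$, and applying Proposition~\ref{prop:KoetheNec} (with $I=\{0\}$ and $N=H$) shows that $R[P]\cong R[G/H]$ is a left (resp. right) K\"othe ring. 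Now Corollary~\ref{cor:CharPlocal}, applied to the $p$-group $P$ over the division ring $R$ of characteristic $p$, tells me that $P$ is $p'$-by-cyclic $p$; as $P$ is itself a $p$-group, its $p'$-part is trivial, so $P$ is cyclic. Taking $N=H$ in the definition, $G=P\times H$ is then $p'$-by-cyclic $p$, giving (iii).

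The main obstacle is exactly this last passage from the purely module-theoretic K\"othe property of $R[G]$ to a group-theoretic conclusion about $G$. The two crucial moves are: (a) producing the direct-product decomposition $G=P\times H$, where the lagrangian hypothesis is precisely what guarantees a Hall $p'$-subgroup $H$ of order $m$ without recourse to Schur--Zassenhaus or the classification of Dedekind groups, while Dedekindness promotes both $H$ and $P$ to normal subgroups; and (b) reducing to the modular $p$-group case, so that Corollary~\ref{cor:CharPlocal} can pin down $P$ as cyclic. I would also remark that one never actually needs the ``or $R[G]$ is semisimple'' alternative in order to deduce (iii): the argument always yields the stronger conclusion that $G$ is $p'$-by-cyclic $p$, and when $p\nmid|G|$ this simply reduces to $G$ being a $p'$-group, which by Maschke's theorem is exactly the semisimple case.
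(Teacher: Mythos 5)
Your proposal is correct, and its substantive direction runs on the same engine as the paper's: the lagrangian--Dedekind hypotheses produce a normal subgroup of order $m$ (the $p'$-part of $|G|$), Proposition~\ref{prop:KoetheNec} passes the K\"{o}the property to the group ring of the $p$-group quotient, and Corollary~\ref{cor:CharPlocal} forces that quotient to be cyclic. The differences are organizational rather than substantive. The paper first splits on whether $|G|\cdot 1_R \in U(R)$: when $p \nmid |G|$ it invokes Maschke's theorem and lands in the semisimple alternative of (iii), and only when $p \mid |G|$ does it run the group-theoretic argument (splitting further into $|G|=p^r$ and $|G|=p^r m$). You instead run one uniform argument via Sylow's theorem and the internal direct product $G = P \times H$, which buys a case-free and formally stronger conclusion, but two remarks are in order. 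First, the direct product is a detour: you never need $P$ as a subgroup, only the quotient $G/H$, which has order $p^a$ and is therefore a $p$-group --- exactly what the paper feeds into Corollary~\ref{cor:CharPlocal}. Second, your claim that $G$ is \emph{always} $p'$-by-cyclic $p$, including when $p \nmid |G|$, rests on the convention that the trivial group is a cyclic $p$-group; this is consistent with the paper's definition (a finite group is a $p$-group if its order is a power of $p$, and $1=p^0$), but it is precisely the point the paper sidesteps by routing that case through Maschke's theorem into the semisimple alternative. So your closing remark that the ``or $R[G]$ is semisimple'' clause is never needed is correct under the paper's definitions, though it hinges on that convention, and a referee insisting that ``cyclic $p$-group'' means nontrivial would send you back to Maschke for exactly that case --- as you yourself note.
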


\begin{proof}
(i)$\Rightarrow$(iii)
We consider two mutually exclusive cases.\\
\textbf{Case 1}: $|G| \cdot 1_R \in U(R)$.
By Maschke's theorem, $R[G]$ is semisimple.\\ 
\textbf{Case 2}: $|G| \cdot 1_R \notin U(R)$.
By Lemma~\ref{lem:invertibility}(b), we get that $p$ divides $|G|$.
There is a positive integer $r$ such that either $|G|=p^r$ or $|G|=p^r m$, where $p$ and $m$ are relatively prime.

If $|G|=p^r$, then $G$ is $p'$-by-cyclic $p$, by Corollary~\ref{cor:CharPlocal}.
On the other hand, if $|G|=p^r m$, then using that $G$ is lagrangian and Dedekind, we may choose a normal subgroup
$N$ of $G$ such that $|N|=m$. Note that $N$ is a $p'$-group and that $G/N$ is a $p$-group.
By Proposition~\ref{prop:KoetheNec} we get that $R[G/N]$ is a 
left 
K\"{o}the ring,
and hence, by Corollary~\ref{cor:CharPlocal}, $G/N$ is $p'$-by-cyclic $p$.
But the only normal subgroup of
$G/N$ which is a $p'$-group is the trivial group. Hence, $G/N$ is a cyclic $p$-group.
This shows that $G$ is $p'$-by-cyclic $p$.

(iii)$\Rightarrow$(i)
This follows by combining Theorem~\ref{thm:Connell}, Theorem~\ref{thm:Passman} and Theorem~\ref{thm:Kothe}, or by the fact that every semisimple ring, viewed as a module over itself, is a direct sum of simple, and thus cyclic, submodules.

(ii)$\Leftrightarrow$(iii)
The proof is analogous to the proof of (i)$\Leftrightarrow$(iii).
\end{proof}

\section{Pure semisimplicity of group rings}\label{Sec:PureProj}

In Section~\ref{subsec:pureproj} we make an observation about pure projectivity of modules over (not necessarily commutative) group rings and prove our second main result, which is a Maschke type result for pure semisimplicity of group rings (see Theorem~\ref{thm:RGPureSemisimple}).
In Section~\ref{subsec:commGPrings} we use our previous observation
to provide an alternative proof of Theorem~\ref{thm:localcase}
in the commutative setting.
We also rephrase our first main result in the commutative setting (see Corollary~\ref{Cor:MainThmComm}).

\subsection{Pure projectivity and pure semisimplicity}\label{subsec:pureproj}

We want to emphasize that in this subsection we are dealing with general rings that are not necessarily abelian nor commutative.

\begin{Lem}\label{lem:PureExtSeqRestriction}
Let $S$ be a ring and $T\subseteq S$ a subring.
If
\begin{equation}\label{Eq:LemPureExact1}
\xymatrix@1{
0 \ar[r]^{} & A \ar[r]^{f} & B \ar[r]^{g} & C \ar[r]^{} & 0
}
\end{equation}
is a pure exact sequence of left (resp. right) $S$-modules, then by restricion of scalars
\begin{equation}\label{Eq:LemPureExact2}
\xymatrix@1{
0 \ar[r]^{} & A  \ar[r]^{f'} & B  \ar[r]^{g'} & C \ar[r]^{} & 0
}
\end{equation}
is a pure exact sequence of left (resp. right) $T$-modules.
\end{Lem}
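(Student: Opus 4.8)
The plan is to reduce the statement to the matrix characterization of pure submodules recorded above, and to exploit the fact that the class of linear systems with coefficients in $T$ is contained in the class of linear systems with coefficients in $S$.

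First I would observe that, since restriction of scalars along the inclusion $T \hookrightarrow S$ leaves the underlying abelian groups as well as the maps $f$ and $g$ unchanged, the sequence \eqref{Eq:LemPureExact2} is automatically exact as a sequence of $T$-modules; here $f'$ and $g'$ denote $f$ and $g$ regarded as $T$-module homomorphisms. By the observation recorded earlier that a short exact sequence is pure exact precisely when the image of its first map is a pure submodule, it therefore suffices to prove that $P := f(A)$, which is a pure $S$-submodule of $B$ by hypothesis, remains a pure $T$-submodule of $B$ after restriction of scalars.

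To verify purity over $T$, I would invoke the defining condition directly. Consider an $m$-by-$n$ matrix $(a_{ij})$ with entries in $T$, elements $y_1,\ldots,y_m \in P$, and suppose there exist $x_1,\ldots,x_n \in B$ with $\sum_{j=1}^n a_{ij} x_j = y_i$ for $i \in \{1,\ldots,m\}$, where the products are formed using the $T$-action. Since each $a_{ij}$ lies in $T \subseteq S$ and the $T$-action on $B$ is by definition the restriction of the $S$-action, the very same equations hold when the products are interpreted via the $S$-action; that is, $(a_{ij})$ is also a matrix over $S$ and the same elements $x_1,\ldots,x_n$ solve the system over $S$. Purity of $P$ in $B$ over $S$ then supplies elements $x_1',\ldots,x_n' \in P$ satisfying $\sum_{j=1}^n a_{ij} x_j' = y_i$ for $i \in \{1,\ldots,m\}$, and, as the $S$- and $T$-actions agree on these elements, the same $x_1',\ldots,x_n'$ witness the required condition over $T$. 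Hence $P$ is pure in $B$ as a $T$-module.

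The argument involves no genuine obstacle: the only point requiring care is the bookkeeping that the solvability of a linear system is literally the same predicate whether the scalars $a_{ij}\in T$ act through $T$ or through $S$, so that $S$-purity (which tests all matrices over $S$) is a stronger condition than $T$-purity (which tests only matrices over the subring $T$). The right-handed case is treated analogously, replacing left systems by right systems throughout.
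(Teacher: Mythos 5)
Your proof is correct and follows essentially the same route as the paper: both reduce to the remark that pure exactness is equivalent to $f(A)$ being a pure submodule of $B$, and then use the matrix characterization of purity together with the containment $T\subseteq S$. The paper leaves the matrix verification as ``readily verified''; you have simply written out that bookkeeping explicitly.
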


\begin{proof}
Suppose that \eqref{Eq:LemPureExact1} is pure exact.
It is clear that \eqref{Eq:LemPureExact2} is a short exact sequence of left (resp. right) $T$-modules.
By assumption, $f(A)$ is a pure submodule of $B$.
Using this and the fact that $T$ is a subring of $S$,
it is readily verified that $f'(A)$ is a pure submodule of $B$.
This shows that \eqref{Eq:LemPureExact2} is a pure exact sequence of left (resp. right) $T$-modules.
\end{proof}

We are now ready to prove our second main result.

\begin{proof}[Proof of Theorem~\ref{thm:RGPureSemisimple}]
The ''if'' statement follows immediately from Proposition~\ref{prop:KoetheNec}(a).
Now we show the ''only if'' statement.
Suppose that $R$ is a left pure semisimple ring.
Let $M$ be an arbitrary left $R[G]$-module and let
\begin{equation}\label{Eq:PureExactSeq}
\xymatrix@1{
0 \ar[r]^{} & K \ar[r]^{\varphi} & L \ar[r]^{\psi} & M \ar[r]^{} & 0
}
\end{equation}
be an arbitrary pure exact sequence of left $R[G]$-modules.
We are going to show that \eqref{Eq:PureExactSeq} splits, and thereby that $M$ is pure projective.
The desired conclusion will then follow from Proposition~\ref{prop:CharPureSS}.
By restriction of scalars, $K$, $L$ and $M$ may be viewed as left $R$-modules
and, by Lemma~\ref{lem:PureExtSeqRestriction},
we obtain a corresponding pure exact sequence
\begin{equation}\label{Eq:PureExactSeqR}
\xymatrix@1{
0 \ar[r]^{} & K \ar[r]^{\widetilde{\varphi}} & L \ar[r]^{\widetilde{\psi}} & M \ar[r]^{} & 0
}
\end{equation}
of left $R$-modules.
Using Proposition~\ref{prop:CharPureSS}, we conclude that \eqref{Eq:PureExactSeqR} splits. 
That is, there exists a left $R$-module homomorphism $\widetilde{\phi} : M \to L$ such that $
\widetilde{\psi}
\circ
\widetilde{\phi}  = \id_M$.

Now we define the map 
$\phi : M \to L, \ m \mapsto  |G|^{-1} \sum_{g \in G} g^{-1} \widetilde{\phi} (gm).$
It is clear that $\phi$ is well-defined and additive.
Take $h\in G$ and $m\in M$.
We get that 
\begin{eqnarray*}
\phi(hm) = 
|G|^{-1} \sum_{g \in G} g^{-1} \widetilde{\phi} (ghm) 
= |G|^{-1} \sum_{\lambda \in G} h \lambda^{-1} \widetilde{\phi} (\lambda m)
= h \left( |G|^{-1} \sum_{\lambda \in G} \lambda^{-1} \widetilde{\phi} (\lambda m) \right) 
= h \cdot \phi(m)
\end{eqnarray*}
and hence $\phi$ is a left $R[G]$-module homomorphism.
Moreover, for any $m\in M$ we get that
\begin{eqnarray*}
   \psi \circ \phi (m) &=&  \psi \left( |G|^{-1} \sum_{g \in G} g^{-1} \widetilde{\phi} (gm) \right) = |G|^{-1} \sum_{g \in G} g^{-1} \left( \psi \circ \widetilde{\phi} (gm) \right) \\
   &=&
   |G|^{-1} \sum_{g \in G} g^{-1} \left( gm \right)
   =
 \left( |G|^{-1} \sum_{g \in G} g^{-1} g \right) m
   = 1 \cdot m = m.
\end{eqnarray*}
This shows that \eqref{Eq:PureExactSeq} splits. 
The right-handed case is treated similarly.
\end{proof}

\begin{Rem}
If a group ring $R[G]$ is left (or right) pure semisimple, then $G$ is necessarily finite (see Proposition~\ref{prop:KoetheNec}).
However, $|G| \cdot 1_R \in U(R)$ is not a necessary condition for pure semisimplicity of $R[G]$.
To see this one can e.g. look at Example~\ref{ex:K2S3}.
\end{Rem}

\subsection{Commutative group rings}\label{subsec:commGPrings}

By combining Theorem~\ref{thm:RGPureSemisimple} with Theorem~\ref{thm:Girvan} we obtain the following result.

\begin{Cor}\label{Cor:RGmoduleIsPureProj}
Let $R$ be a commutative ring and let $G$ be a finite abelian group. 
Suppose that $|G|\cdot 1_R \in U(R)$.
Then $R$ is a K\"{o}the ring if and only if $R[G]$ is a K\"{o}the ring.
\end{Cor}

With the use of the above corollary we can provide an alternative proof
of Theorem~\ref{thm:localcase} for commutative group rings.
The alternative proof is identical to the proof of Theorem~\ref{thm:localcase}, except for
(ii)$\Rightarrow$(i), Case 2, where we can make use of Corollary~\ref{Cor:RGmoduleIsPureProj}
instead of invoking Theorem~\ref{thm:DorseyFinite}.

\begin{Rem}\label{Rem:CommCase}
Recall that if $R$ is a commutative artinian ring, then it has only finitely many maximal ideals
$M_1, \ldots, M_n$.
Moreover, there is a positive integer $k$ such that $\Pi_{i=1}^n M_i^k = 0$.
The ideals $M_1^k, \ldots, M_n^k$ are coprime in pairs.
Consequently one may show that
$R \cong R/M_1^k \times \ldots \times R/M_n^k$ (see e.g. \cite[Theorem 8.7]{AtiMac}).
For any $i \in \{1,\ldots,n\}$ we make the following observations:

The ring $R/M_i^k$ is a local artinian ring.
By the third isomorphism theorem we get that
$R/I \cong (R/I^k)/(I/I^k)$ for any ideal $I$ of $R$.
In particular, $M_i/M_i^k$ is a maximal ideal of $R/M_i^k$ if and only if $M_i$ is a maximal ideal of $R$.
\end{Rem}

Using the above observations, Theorem~\ref{thm:MainThmA} takes a slightly more elegant form
in the commutative setting.

\begin{Cor}\label{Cor:MainThmComm}
 Let $ R $ be a unital commutative ring and let $ G $ be an abelian group.
Denote by $\mathrm{Max}(R)$ the set of maximal ideals of $R$.
 The following two assertions are equivalent:
\begin{enumerate}[{\rm (i)}]
    \item The group ring $ R[G] $ is a K\"{o}the ring;
    \item $R$ is a K\"{o}the ring and $G$ is a finite group which is
 $p^{\prime}$-by-cyclic $p$
	for every $p \in \{ \Char(R/M) \mid M \in \mathrm{Max}(R) \}$.
  Moreover, $|G| \cdot 1_{R/M_i^k} \in U(R/M_i^k)$
  whenever $R/M_i^k$,
	appearing in the decomposition of $R$ (see Remark~\ref{Rem:CommCase}),
	is not semiprimitive.
\end{enumerate}
\end{Cor}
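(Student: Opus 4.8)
The plan is to deduce this from Theorem~\ref{thm:MainThmA}, the only real work being to translate the abstract local decomposition appearing there into the canonical one supplied by Remark~\ref{Rem:CommCase}. First I would record the two observations that make Theorem~\ref{thm:MainThmA} applicable: since $R$ is commutative and $G$ is abelian, the group ring $R[G]$ is commutative and hence abelian; and in both directions $R$ will be a commutative K\"{o}the ring, so by Theorem~\ref{thm:KCP} it is an artinian principal ideal ring. In particular $R$ is a commutative artinian ring, so Remark~\ref{Rem:CommCase} yields the decomposition $R \cong R/M_1^k \times \ldots \times R/M_n^k$ into local artinian rings indexed by $\mathrm{Max}(R)=\{M_1,\ldots,M_n\}$.

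For the implication (i)$\Rightarrow$(ii), I would first apply Proposition~\ref{prop:KoetheNec}(b) to see that $R$ is a K\"{o}the ring and $G$ is finite, which fixes the decomposition above. Rather than appeal to uniqueness of the local decomposition in order to match it against the one produced by Theorem~\ref{thm:MainThmA}, I would argue directly on the canonical factors, exactly as in the proof of Theorem~\ref{thm:MainThmA}: for each $i$ the projection $R \to R/M_i^k$ induces a surjection $R[G] \to (R/M_i^k)[G]$, so $(R/M_i^k)[G]$ is a commutative (hence abelian) K\"{o}the ring by Lemma~\ref{lem:HomomorphicImage}(a). Applying Theorem~\ref{thm:localcase} to each local ring $R/M_i^k$ then gives that $G$ is $p^{\prime}$-by-cyclic $p$ whenever the residue characteristic is positive, and that $|G|\cdot 1_{R/M_i^k}\in U(R/M_i^k)$ whenever $R/M_i^k$ fails to be a division ring. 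The translation of indices is handled by the third isomorphism theorem noted in Remark~\ref{Rem:CommCase}, namely $\Char\big((R/M_i^k)/(M_i/M_i^k)\big)=\Char(R/M_i)$, so that the residue characteristics of the local factors are exactly the elements of $\{\Char(R/M)\mid M\in\mathrm{Max}(R)\}$; a factor $R/M_i^k$ is a division ring precisely when it is semiprimitive (equivalently $M_i/M_i^k=J(R/M_i^k)=0$), which is the dichotomy used in the invertibility clause.

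For the converse (ii)$\Rightarrow$(i), I would reverse this. Starting from the hypotheses, $R$ is a commutative K\"{o}the ring, hence artinian, so Remark~\ref{Rem:CommCase} again gives $R\cong\prod_{i=1}^n R/M_i^k$ with each $R/M_i^k$ a local K\"{o}the ring (Lemma~\ref{lem:HomomorphicImage}(a)) and $(R/M_i^k)[G]$ abelian. The stated conditions on $G$---that it be finite and $p^{\prime}$-by-cyclic $p$ for each positive residue characteristic, together with $|G|\cdot 1_{R/M_i^k}\in U(R/M_i^k)$ at the non-semiprimitive factors---are precisely the hypotheses of assertion (ii) of Theorem~\ref{thm:localcase} for each factor, so each $(R/M_i^k)[G]$ is a K\"{o}the ring. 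Finally, since $R[G]\cong\prod_{i=1}^n (R/M_i^k)[G]$, Lemma~\ref{lemma:FinProdAbelKothe} shows that $R[G]$ is a K\"{o}the ring.

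I would expect the only genuinely delicate point to be the bookkeeping that makes the two index sets match: confirming that the primes occurring in Theorem~\ref{thm:MainThmA} correspond, via $R/M_i^k$ and the third isomorphism theorem, to the set $\{\Char(R/M)\mid M\in\mathrm{Max}(R)\}$ of the corollary, and that ``not semiprimitive'' is the correct commutative reformulation of ``not a division ring'' for the local factors. Working directly with the canonical decomposition from Remark~\ref{Rem:CommCase}, instead of invoking uniqueness of the local decomposition of a commutative artinian ring, keeps this entirely routine.
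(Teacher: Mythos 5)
Your proof is correct and follows essentially the same route as the paper, which obtains the corollary from Theorem~\ref{thm:MainThmA} together with the canonical decomposition $R \cong R/M_1^k \times \ldots \times R/M_n^k$ of Remark~\ref{Rem:CommCase}. Your choice to re-run the proof of Theorem~\ref{thm:MainThmA} directly on the canonical factors (via Proposition~\ref{prop:KoetheNec}, Lemma~\ref{lem:HomomorphicImage}(a), Theorem~\ref{thm:localcase} and Lemma~\ref{lemma:FinProdAbelKothe}) rather than invoke it as a black box is a sensible refinement of the same argument, since it sidesteps the need to match the abstract local decomposition in Theorem~\ref{thm:MainThmA} with the canonical one.
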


\section{Examples}\label{Sec:Examples}

In this section we present some examples of group rings which are K\"{o}the rings and group rings which are not.

\begin{Examp}
Let $G=Q_8$ be the quaternion group of order $8$.

(a)
Let $R=\Z$ be the ring of integers. 
By combining
\cite[Proposition 3.4]{Sehgal1975} with \cite[Lemma 6.4]{Oinert2019}
we immediately conclude that $\Z[G]$ is an abelian ring.
However, the integral group ring $\Z[G]$ is not a K\"{o}the ring for at least two reasons;
$\Z$ is not artinian, and $G$ is not $p'$-by-cyclic $p$ for any prime number $p$.

(b)
Let $R=\mathbb{H}$ be the ring of real quaternions. 
By Theorem~\ref{thm:divNC} we conclude that $\mathbb{H}[G]$ is a K\"{o}the ring.
Notice that $R$ is non-commutative and that $G$ is non-abelian.
\end{Examp}

\begin{Examp}\label{ex:K2S3}
Let $K$ be a field of characteristic $2$ and let $G=S_3$ be the symmetric group on $3$ letters.
The alternating group $A_3$ is a normal subgroup of $S_3$ with $|A_3|=3$,
and $S_3/A_3$ is cyclic of order $2$.
Thus, $S_3$ is $p'$-by-cyclic $p$ with $p=2$.
However, $K[G]$ is not abelian.
Indeed, let $e$ denote the identity permutation
and choose a non-identity permutation $g \in S_3$ satisfying $g^3=e$.
It is readily verified that the element $u=e+g+g^2$ is a non-central idempotent in $K[G]$.
Nevertheless, we notice that $K[G]$ is an artinian principal ideal ring
and hence, by Theorem~\ref{thm:Kothe}, $K[G]$ is in fact a K\"{o}the ring.
\end{Examp}

\begin{Examp}
Let $C_3$ be a cyclic group of order $3$, let $D_8$ be the dihedral group of order $8$,
and consider the finite group $G=C_3 \times D_8$.
Notice that $N=\{e_{C_3}\} \times D_8$ is a normal subgroup of $G$ with $|N|=2^3$,
and that $G/N \cong C_3$ is of order $3$.
Thus, $G$ is $p'$-by-cyclic $p$ with $p=3$.

(a)
Let $K$ be a field of characteristic $3$.
We notice that the subring $K[C_3]$ contains nonzero nilpotent elements. Hence, $K[G]$ is not reduced.
Nevertheless, by \cite[p. 69]{CoehlhoPol1987}, $K[G]$ is in fact abelian.
By Theorem~\ref{thm:MainThmA} we conclude that $K[G]$ is a K\"{o}the ring.

(b)
Let $K$ be a field of characteristic $13$.
The group ring $K[G]$ is still abelian.
However, $G$ is not $p'$-by-cyclic $p$ with $p=13$.
Thus, by Theorem~\ref{thm:MainThmA}, $K[G]$ is not a K\"{o}the ring.
\end{Examp}


\end{document}